\renewcommand{\thesubfigure}{\thefigure.\arabic{subfigure}}
\renewcommand{\p@subfigure}{}
\renewcommand{\@thesubfigure}{\thesubfigure:\hskip\subfiglabelskip}
\newcommand{\Cech}{\u{C}ech}
\newcommand{\norm}[1]{\left\|#1\right\|}  
\newcommand{\Int}{\mbox{int}}
\newcommand{\bdy}{\mbox{bdy}}
\newcommand{\Nrv}{\mbox{Nrv}}
\newcommand{\near}{\delta} 
\newcommand{\dnear}{\delta_{\Phi}} 
\newcommand{\dcap}{\mathop{\cap}\limits_{\Phi}} 
\newcommand{\cx}{\mbox{cx}}
\newcommand{\dfar}{{\not\delta}_{\Phi}} 
\newcommand{\sn}{\mathop{\delta}\limits^{\doublewedge}} 
\newcommand{\snd}{\mathop{\delta_{_{\Phi}}}\limits^{\doublewedge}} 
\newcommand{\sdfar}{\stackrel{\not{\text{\normalsize$\delta$}}}{\text{\tiny$\doublevee$}}_{_{\Phi}}} %
\newcommand{\sfar}{\stackrel{\not{\text{\normalsize$\delta$}}}{\text{\tiny$\doublevee$}}} 
\newcommand{\notfar}{\mathop{\not{\delta}}\limits^{\doublewedge}} 
\newtheorem{example}{Example}
\newtheorem{remark}{Remark}
\newtheorem{definition}{Definition}
\newtheorem{lemma}{Lemma}
\newtheorem{theorem}{Theorem}
\newtheorem{proposition}{Proposition}
\newtheorem{corollary}{Corollary}
\newtheorem{notation}{Notation}
\definecolor{light}{gray}{0.80}
\begin{document}

\setlength{\intextsep}{0pt}
\begin{wrapfigure}[8]{R}{0.36\textwidth}
\begin{minipage}{4.4 cm}
\centering
\begin{pspicture}
(-0.2,-0.5)(5.0,4.2)
\psframe[linecolor=black](-0.5,-0.0)(4.3,4.0)
\pscircle[fillstyle=solid,linecolor=gray!55,opacity=0.5](2.5,1.8){1.2}
\pscircle[fillstyle=solid,fillcolor=gray!75,opacity=0.5](2.5,2.5){1.2}
\pscircle[fillstyle=solid,fillcolor=gray!35,opacity=0.5](1.5,1.5){1.2}
\rput(-0.25,3.7){$\boldsymbol{K}$}
\rput(0.2,2.4){$\boldsymbol{B_r(p)}$}
\rput(1.2,3.5){$\boldsymbol{B_r(q)}$}
\rput(3.5,0.5){$\boldsymbol{B_r(s)}$}
\end{pspicture}
\caption[]{Balls}
\label{fig:CechComplex}
\end{minipage}
\end{wrapfigure}

\title[Proximal Planar \u{C}ech Nerves]{Proximal Planar \u{C}ech Nerves.\\  An Approach to
Approximating the Shapes of Irregular, Finite, Bounded Planar Regions}

\author[J.F. Peters]{J.F. Peters}
\email{James.Peters3@umanitoba.ca}
\address{\llap{$^{\alpha}$\,}
University of Manitoba, WPG, MB, R3T 5V6, Canada and
Department of Mathematics, Faculty of Arts and Sciences, Ad\.{i}yaman University, 02040 Ad\.{i}yaman, Turkey}
\thanks{The research has been supported by the Natural Sciences \&
Engineering Research Council of Canada (NSERC) discovery grant 185986 
and Instituto Nazionale di Alta Matematica (INdAM) Francesco Severi, Gruppo Nazionale per le Strutture Algebriche, Geometriche e Loro Applicazioni grant 9 920160 000362, n.prot U 2016/000036.}

\subjclass[2010]{Primary 54E05 (Proximity); Secondary 68U05 (Computational Geometry)}

\date{}

\dedicatory{Dedicated to E. \Cech\ and Som Naimpally}

\begin{abstract}
This article introduces proximal \u{C}ech\ nerves and \u{C}ech\ complexes, restricted to finite, bounded regions $K$ of the Euclidean plane.  A \u{C}ech\ nerve is a collection of intersecting balls.  A \u{C}ech\ complex is a collection of nerves that cover $K$. \u{C}ech\ nerves are proximal, provided the nerves are close to each other, either spatially or descriptively.  A \u{C}ech\ nerve has an advantage over the usual Alexandroff nerve, since we need only identify the center and fixed radius of each ball in a \u{C}ech\ nerve instead of identifying the three vertices of intersecting filled triangles (2-simplexes) in an Alexandroff nerve.  As a result, \u{C}ech\ nerves more easily cover $K$ and facilitate approximation of the shapes of irregular finite, bounded planar regions.  A main result of this article is extension of the Edelsbrunner-Harer Nerve Theorem for descriptive and non-descriptive \u{C}ech\ nerves and \u{C}ech\ complexes, covering $K$.
\end{abstract}
\keywords{Ball, \Cech\ Complex, \Cech\ Nerve, Cover, Homotopic equivalence, Proximity}

\maketitle

\section{Introduction}
\Cech\ complexes were introduced by E. \Cech~\cite[\S A.5]{Cech1966} during a 1936-1939 Brno seminar.  In keeping with more recent work, a \Cech\ complex of a finite set of points $K\in \mathbb{R}^2$ is a collection of intersecting convex sets that are closed geometric balls, each with radius $r$~\cite[\S III.2]{Edelsbrunner1999},~\cite[\S 2.2.2]{Munch2013PhDpersistentHomology}.
Let $X=\mathbb{R}^2$ be the Euclidean plane, $K\in 2^X$, a finite, bounded plane region, and let $\Nrv A$ denote a nerve, defined by
$\Nrv A = \left\{E\subseteq K: \bigcap E\neq \emptyset\right\}$.
A closed ball $B_r(x)$\\ with center $x\in X$ 
and with radius $r > 0$ is defined\\ by
\begin{align*}
B_r(x) = \left\{y\in X: \norm{x - y}\leq r\right\}.\qquad\qquad\qquad\qquad\qquad\qquad\qquad
\end{align*}
A \Cech\ nerve on $K\in 2^X$ (denoted by \Cech $(r)$) is a collection of intersecting balls, {\em i.e.},
\[
\mbox{\Cech}(r) = \Nrv\left\{B_r(x): x\in K\right\} = \left\{B_r(x)\subseteq K: \bigcap B_r(x)\neq \emptyset\right\}.
\]

\begin{notation}
In addition to the usual \Cech $(r)$ nerve, several other forms of \Cech\ nerves are introduced here.  Let
\Cech$_r(A)$ be a \Cech\ nerve on $A\subset K$, defined by
\[
\mbox{\u{C}ech}_r(A) = \Nrv\left\{B_r(x): x\in A\right\}.
\]
Let $2^{\mbox{\u{C}ech}_r(A)}$ be a collection of \Cech\ nerves, $A\in 2^K$ in the collection of subsets in $K$ and let $\cx A$ be a \mbox{\u{C}ech} complex that covers $K$ for $A\in 2^K$, {\em i.e.},
\begin{align*}
\cx A &= 2^{\mbox{\u{C}ech}_r(A)},  A\in 2^K:\\
K &\subseteq \cx A\ \mbox{(covering of $K$ by a \Cech\ complex $\cx A$)}.
\end{align*}
In the sequel, a descriptive \Cech\ nerve (denoted by \Cech$_{r,\Phi}(K)$) on $K$ is also introduced.
\qquad \textcolor{blue}{\Squaresteel}
\end{notation}

\begin{example}
Let $K\in 2^X$ be a finite bounded subset of the Euclidean plane.
Let \Cech$(r)$ be a \Cech\ nerve, which is a collection of three intersecting closed geometric balls of points in a plane region $K$ ($B_r(p),B_r(q),B_r(s)$, each with radius $r$ and centers $p,q,s$ (not shown)), represented by shaded disks in Fig.~\ref{fig:CechComplex}, where
\begin{align*}
{p,q,s} &\subset K,\\
\mbox{\Cech}(r) &= \Nrv\left\{B_r(x): x\in {p,q,s}\right\}\\
                  &= \left\{B_r(x)\subseteq K: \mathop{\bigcap}\limits_{x\in \left\{p,q,s\right\}} B_r(x)\neq \emptyset\right\}.\ \mbox{\qquad \textcolor{blue}{\Squaresteel}}
\end{align*}
\end{example}

\noindent A \u{C}ech\ nerve on $A\subseteq K$ is denoted by $\mbox{\u{C}ech}_r(A)$. A \mbox{\u{C}ech} complex is a collection of nerves the cover $A\subseteq K$ (denoted by $\cx A$).

\begin{remark}
An important assumption made here is that the covering condition\footnote{Many thanks to the reviewer for pointing this out.} is satisfied, {\em i.e.}, each finite, bounded subset $K$ in the Euclidean plane is covered by a collection of nerves, {\em i.e.},
\begin{align*}
K &\subseteq \mathop{\bigcup}\limits_{A\subseteq K} \mbox{\u{C}ech}_r(A),\\
K &\subseteq \cx A: A\in 2^K.
\end{align*}
For more about this, see~\cite{Goaoc2015SoCGgoodCoveringCondition}. \qquad \textcolor{blue}{\Squaresteel}
\end{remark}

The study of nerves in complexes was introduced by P. Alexandroff~\cite{Alexandroff1928dimensionsbegriff},~\cite[\S 33, p. 39]{Alexandroff1932elementaryConcepts}, elaborated by J. Leray~\cite{Leray1945nerveTheorem},\cite{Leray1946homology}, K. Borsuk~\cite{Borsuk1948FMsimplexes} and a number of others such as R. Ghrist~\cite[\S 2.9, p. 31]{Ghrist2014elemAppliedTopology}, A. Hatcher~\cite[\S 3.3, p. 257]{Hatcher2002AlgebraicTopology}, A. Bj\"{o}rner~\cite[\S 4]{Bjorner2003JCTnerves}, M. Adamaszek et al.~\cite{Adamaszek2014arXivNerveComplexes}, E.C. de Verdi\`{e}re et al.~\cite{Colin2012multinerves}, H. Edelsbrunner and J.L. Harer~\cite{Edelsbrunner1999}, and more recently by M. Adamaszek, H. Adams, F. Frick, C. Peterson and C. Previte-Johnson~\cite{Adamaszek2016DCGnerveComplexesOfCircularArcs}.  In this paper, an extension of the Edelsbrunner-Harer Nerve Theorem is given.

\begin{theorem}\label{thm:nerveTheorem}{\bf\rm Nerve Theorem~\cite[\S III.2, p. 59]{Edelsbrunner1999}}.\\
Let $F$ be a finite collection of closed, convex sets in Euclidean space.  Then the nerve of $F$ and the union of the sets in $F$ have the same homotopy type.
\end{theorem}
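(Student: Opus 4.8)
The plan is to exploit the defining feature of the hypothesis --- convexity --- to reduce the statement to the classical Nerve Lemma for good covers, and then to prove that lemma by induction on the number of sets in $F$ using a homotopy-pushout gluing argument.

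First I would record the one geometric consequence of convexity that does all the work. For any subcollection $G\subseteq F$, the intersection $\bigcap G$ is again convex (an intersection of convex sets is convex), and a nonempty convex set is contractible, since it deformation retracts onto any of its points along straight-line segments. Hence every nonempty finite intersection of members of $F$ is contractible; that is, $F$ is a \emph{good cover} of the union $U=\bigcup F$. This is exactly the input the Nerve Lemma requires, and it is where closedness together with convexity is essential. (If one prefers open sets in order to use partition-of-unity machinery, I would first thicken each closed convex $U_i$ to $U_i^{\varepsilon}=\{x:d(x,U_i)<\varepsilon\}$; the thickenings are again convex, and by finiteness of $F$ one may choose $\varepsilon$ small enough that the thickened family has the same nerve and a homotopy-equivalent union.)

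Next I would set up the induction on $n=|F|$. The base case $n=1$ is immediate, since the nerve is a single vertex and the union is a single contractible convex set. For the inductive step write $F=\{U_1,\dots,U_n\}$, set $A=U_n$ and $B=\bigcup_{i<n}U_i$, so that $U=A\cup B$ and
\[
A\cap B=\bigcup_{i<n}\left(U_n\cap U_i\right).
\]
Each $U_n\cap U_i$ is convex, so $\{U_n\cap U_i\}_{i<n}$ is a good cover of $A\cap B$ consisting of fewer sets, and the inductive hypothesis gives $A\cap B\simeq\Nrv\{U_n\cap U_i\}_{i<n}$. Likewise $B\simeq\Nrv\{U_1,\dots,U_{n-1}\}$ by induction, while $A=U_n$ is contractible. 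Combinatorially, $\Nrv F$ decomposes as the union of $\Nrv\{U_1,\dots,U_{n-1}\}$ with the closed star of the vertex $v_n$ (itself a cone, hence contractible), and these two pieces meet precisely in the subcomplex $\Nrv\{U_n\cap U_i\}_{i<n}$, which is the link of $v_n$.

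Finally I would glue. Both $U$ and $\Nrv F$ are now written as unions of two pieces along a common intersection, and the inductive hypothesis supplies homotopy equivalences on each of the three corresponding pieces, using that $A$ and the closed star of $v_n$ are contractible. Applying the gluing lemma for homotopy equivalences of pushout squares then yields $U\simeq\Nrv F$, closing the induction. The main obstacle is making this last step rigorous: the gluing lemma demands that the inclusions $A\cap B\hookrightarrow A$ and $A\cap B\hookrightarrow B$, together with their simplicial counterparts, be cofibrations, and that the three equivalences be chosen \emph{compatibly} so the square commutes up to homotopy. For closed convex subsets of Euclidean space these spaces are ANRs and the inclusions are cofibrations, so the structural hypotheses hold; the delicate point carrying the real content of the theorem is verifying the mutual compatibility of the chosen equivalences, rather than merely their existence piece by piece.
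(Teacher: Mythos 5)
First, a point of comparison: the paper does not prove this statement at all. Theorem~\ref{thm:nerveTheorem} is quoted from Edelsbrunner and Harer with a citation and is used later as a black box, so your attempt can only be measured against the standard proofs in the literature, not against an argument in the paper. Your overall strategy is the classical one, and most of its individual steps are sound: nonempty intersections of convex sets are convex, hence contractible, so $F$ is a good cover; $\Nrv F$ really does decompose as $\Nrv\{U_1,\dots,U_{n-1}\}\cup \mathrm{St}(v_n)$ with intersection $\mathrm{lk}(v_n)\cong\Nrv\{U_n\cap U_i\}_{i<n}$; and the ANR/cofibration remarks for closed convex sets are correct.

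The genuine gap is the one you name in your final sentence and then leave unresolved, and it is not a formality that can be deferred: the gluing lemma does not apply to two pushout diagrams whose corresponding entries are merely \emph{abstractly} homotopy equivalent. It requires an actual ladder of maps $f_A\colon A\to \mathrm{St}(v_n)$, $f_B\colon B\to \Nrv\{U_1,\dots,U_{n-1}\}$, $f_{A\cap B}\colon A\cap B\to \mathrm{lk}(v_n)$ making both squares commute (at least coherently up to homotopy), and your inductive hypothesis produces $f_B$ and $f_{A\cap B}$ with no control whatsoever on how they interact with the inclusions $A\cap B\hookrightarrow B$ and $\mathrm{lk}(v_n)\hookrightarrow\Nrv\{U_1,\dots,U_{n-1}\}$. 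So the induction does not close. That entrywise equivalence is genuinely insufficient can be seen from attaching $D^2$ to $S^1$ by a degree-one versus a degree-two map: the two pushout diagrams have identical entries and the same cofibration leg, yet the pushouts are $D^2\simeq\ast$ and $\mathbb{R}P^2$. The standard repair is to make the maps canonical rather than chosen: prove by induction that the two projections from the Mayer--Vietoris blowup complex (equivalently, the homotopy colimit of the diagram of all nonempty intersections) onto $\bigcup F$ and onto $\Nrv F$ are each homotopy equivalences; since these are restrictions of single global maps, the compatibility needed at every gluing step is automatic. That is in fact how Edelsbrunner--Harer argue, and how Bj\"{o}rner's homotopy-colimit proof runs. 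Separately, your parenthetical thickening remark is false as stated for unbounded sets: two disjoint closed convex sets in the plane can lie at distance zero, so every $\varepsilon$-thickening changes the nerve; this aside is harmless only because your main line works with the closed sets directly.
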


Nonempty sets with the same homotopy type are homotopy equivalent~\cite[\S III.2, p. 58]{Edelsbrunner1999}.

Let $\Phi(p)$ be a feature vector that describes $p$ point in a topological space $K$.  A descriptive closed Ball with center $p$ and with radius $r$ (denoted $B_{r,\Phi}(p)$) is defined by
\[
B_{r,\Phi}(p) = \left\{q\in K: \norm{\Phi(p)-\Phi(q)} \leq r\right\}.
\]
From this we obtain a descriptive nerve \Cech$_{r,\Phi}(K)$ on $K$ defined by
\[
\mbox{\Cech}_{r,\Phi}(K) = \Nrv\left\{B_{r,\Phi}(p): p\in K\right\} = \left\{B_{r,\Phi}(p): \bigcap B_{r,\Phi}(p)\neq \emptyset\right\}.
\]
A descriptive \Cech\ complex is a collection of descriptive nerves on $K$ (denoted by $\cx_{\Phi} K$).  $K$ is \emph{covered} by $\cx_{\Phi} K$, provided $K\subseteq \mathop{\bigcup}\mbox{\Cech}_{r,\Phi}(K)$. A main result in this paper is the following consequence of Theorem~\ref{thm:nerveTheorem}.

\begin{theorem}\label{thm:nerveSpokeTheorem}
Let $K$ be finite, bounded region of the Euclidean plane covered by a descriptive \Cech\ complex $\cx_{\Phi} K$.
Then the descriptive nerve $\mbox{\Cech}_{r,\Phi}(K)$ of $K$ and the union of the descriptive nerves in $\cx_{\Phi} K$ are homotopy equivalent.
\end{theorem}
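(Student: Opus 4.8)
The plan is to reduce Theorem~\ref{thm:nerveSpokeTheorem} to the classical Nerve Theorem (Theorem~\ref{thm:nerveTheorem}) by passing to the feature space, in which each descriptive ball becomes an ordinary convex Euclidean ball. The obstruction to applying Theorem~\ref{thm:nerveTheorem} directly is that a descriptive ball $B_{r,\Phi}(p)$ need not be convex as a subset of the plane $K$; convexity is recovered only after one replaces points by their descriptions.

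First I would record the elementary observation that
\[
B_{r,\Phi}(p) = \left\{q\in K: \norm{\Phi(p)-\Phi(q)}\leq r\right\} = \Phi^{-1}\!\left(\overline{B}_r(\Phi(p))\right),
\]
so that $B_{r,\Phi}(p)$ is exactly the $\Phi$-preimage of the closed Euclidean ball of radius $r$ centred at $\Phi(p)$ in the description space $\Phi(K)\subseteq\mathbb{R}^n$. Hence the finite collection $F=\left\{B_{r,\Phi}(p): p\in K\right\}$, whose nerve is $\mbox{\Cech}_{r,\Phi}(K)$, is carried by $\Phi$ onto the finite collection $F_{\Phi}=\left\{\overline{B}_r(\Phi(p)): p\in K\right\}$ of closed, convex balls in $\mathbb{R}^n$.

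Next I would match the two nerves. Two descriptive balls $B_{r,\Phi}(p)$ and $B_{r,\Phi}(q)$ meet precisely when some $z\in K$ has $\Phi(z)$ lying in both $\overline{B}_r(\Phi(p))$ and $\overline{B}_r(\Phi(q))$; more generally a finite subfamily of $F$ has nonempty intersection exactly when the corresponding balls of $F_{\Phi}$ share a point of $\Phi(K)$. Under the covering hypothesis recorded in the Remark, this identifies the abstract simplicial complex $\mbox{\Cech}_{r,\Phi}(K)=\Nrv F$ with the nerve $\Nrv F_{\Phi}$ of the images. Applying Theorem~\ref{thm:nerveTheorem} to the collection $F_{\Phi}$ of closed convex sets then gives that $\Nrv F_{\Phi}$ and $\bigcup F_{\Phi}$ have the same homotopy type. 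Finally, since $\Phi$ is a continuous feature map and the descriptive balls form a good cover of $K$, the homotopy equivalence is transported back along $\Phi$: the union of the descriptive nerves in $\cx_{\Phi}K$ is sent onto $\bigcup F_{\Phi}$ and the descriptive nerve is preserved, yielding the asserted homotopy equivalence.

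The hard part will be the faithfulness of the passage through $\Phi$: one must verify that the descriptive intersection pattern in $K$ is reproduced exactly by the Euclidean intersection pattern of the images in $\Phi(K)$, and that possible non-injectivity of $\Phi$ (distinct points sharing a feature vector) neither alters the nerve nor destroys the contractibility of the intersections demanded by the good-cover hypothesis underlying Theorem~\ref{thm:nerveTheorem}.
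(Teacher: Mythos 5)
Your reduction to feature space is a genuinely different strategy from the paper's, and it correctly isolates where the mathematical difficulty lives---but both of the items you defer to the ``hard part'' are genuine gaps, not routine verifications. First, the nerve identification fails: $\Nrv F$ records whether descriptive balls meet in $K$, which happens exactly when $\bigcap_i \overline{B}_r(\Phi(p_i))$ meets the image $\Phi(K)$, while $\Nrv F_{\Phi}$ records whether those Euclidean balls meet anywhere in $\mathbb{R}^n$. The covering condition in the paper's Remark says only that $K$ is covered by the nerves; it does not force every nonempty intersection of feature-space balls to be witnessed by a point of $\Phi(K)$. So $\Nrv F_{\Phi}$ can have strictly more simplices than $\Nrv F$, and the two complexes can have different homotopy types. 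Second, even granting the identification, Theorem~\ref{thm:nerveTheorem} applied to $F_{\Phi}$ yields $\Nrv F_{\Phi} \simeq \bigcup F_{\Phi}$, whereas the statement to be proved concerns the union of descriptive balls inside $K$. The restriction of $\Phi$ to that union is merely a continuous map onto a subset of $\bigcup F_{\Phi}$; homotopy equivalences do not transport backwards along continuous maps. Since $\Phi$ can be badly non-injective, a descriptive ball $B_{r,\Phi}(p)=\Phi^{-1}\bigl(\overline{B}_r(\Phi(p))\bigr)$ can be disconnected or otherwise non-contractible, so neither convexity nor the good-cover property holds for $F$ itself, and nothing like a Vietoris--Begle argument (which would require acyclic fibers of $\Phi$) has been supplied to close this.

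For comparison, the paper takes a much thinner route than your plan: it proves only the spatial statement (Lemma~\ref{thm:1spokeHomotopy}, a direct application of Theorem~\ref{thm:nerveTheorem} to closed balls in the plane, which genuinely are convex), and then in a closing Remark asserts that ``consequently'' the descriptive statement holds as well, with no argument addressing the non-convexity of descriptive balls. Your proposal, while incomplete, at least names the obstruction the paper silently skips. To finish along your lines you would need an additional hypothesis---for instance, that intersections of the feature-space balls always meet $\Phi(K)$ and that the fibers of $\Phi$ are contractible or acyclic---and without some such assumption the theorem as stated does not follow from Theorem~\ref{thm:nerveTheorem} by either your route or the paper's.
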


\section{Preliminaries}
This section briefly introduces two basic types of proximities, namely, traditional \emph{spatial proximity} and the more recent \emph{descriptive proximity} in the study of computational proximity~\cite{Peters2016CP}.  

\subsection{Strongly Near Cech Complexes}
A pair of nonempty sets in a proximity space are spatially \emph{near} (\emph{close to each other}), provided the sets have one or more points in common or each set contains one or more points that are sufficiently close to each other.  Let $X$ be a nonempty set, $A,B,C\subset X$. E. \u{C}ech~\cite{Cech1966} introduced axioms for the simplest form of proximity $\delta$, which satisfies
%
\begin{description}
\item[{\rm\bf (P1)}] $\emptyset \not\delta A, \forall A \subset X $.
\item[{\rm\bf (P2)}] $A\ \delta\ B \Leftrightarrow B \delta A$.
\item[{\rm\bf (P3)}] $A\ \cap\ B \neq \emptyset \Rightarrow A \near B$.
\item[{\rm\bf (P4)}] $A\ \delta\ (B \cup C) \Leftrightarrow A\ \delta\ B $ or $A\ \delta\ C$. \qquad \textcolor{blue}{$\blacksquare$}
\end{description}

The \Cech\ proximity becomes a Lodato proximity~\cite{Lodato1962}, provided $\delta$ satisfies the \Cech\ proximity axioms and
%
\begin{description}
\item[{\rm\bf (P5)}]  $A\ \delta\ B$ and $\{b\}\ \delta\ C$ for each $b \in B \ \Rightarrow A\ \delta\ C$.
\qquad \textcolor{blue}{$\blacksquare$}
\end{description}

\noindent The pair $(X,\near)$ is called a Lodato proximity space.  We can associate a topology with the space $(X, \delta)$ by considering as closed sets those sets that coincide with their own closure.  For simplicity, a singleton set $\left\{x\right\}$ is denoted by $x\in X$.  Given $A\subset X$, the interior of $A$ is defined by $\Int A = \left\{x\in X: x\ \near\ A\right\}$.

Nonempty sets $A,B$ in a topological space $X$ equipped with the relation $\sn$, are \emph{strongly near} [\emph{strongly contacted}] (denoted $A\ \sn\ B$), provided the sets have at least one point in common.   The strong contact relation $\sn$ was introduced in~\cite{Peters2015JangjeonMSstrongProximity} and axiomatized in~\cite{PetersGuadagni2015stronglyNear},~\cite[\S 6 Appendix]{Guadagni2015thesis}.\\


Let $X$ be a topological space, $A, B, C \subset X$ and $x \in X$.  The relation $\sn$ on the family of subsets $2^X$ is a \emph{strong proximity}, provided it satisfies the following axioms.

\begin{description}
\item[{\rm\bf (snN0)}] $\emptyset\ \sfar\ A, \forall A \subset X $, and \ $X\ \sn\ A, \forall A \subset X$.
\item[{\rm\bf (snN1)}] $A \sn B \Leftrightarrow B \sn A$.
\item[{\rm\bf (snN2)}] $A\ \sn\ B$ implies $A\ \cap\ B\neq \emptyset$. 
\item[{\rm\bf (snN3)}] If $\{B_i\}_{i \in I}$ is an arbitrary family of subsets of $X$ and  $A\ \sn\ B_{i^*}$ for some $i^* \in I \ $ such that $\Int(B_{i^*})\neq \emptyset$, then $  \ A\ \sn\ (\bigcup_{i \in I} B_i)$ 
\item[{\rm\bf (snN4)}]  $\mbox{int}A\ \cap\ \mbox{int} B \neq \emptyset \Rightarrow A\ \sn\ B$.  
\qquad \textcolor{blue}{$\blacksquare$}
\end{description}

\noindent When we write $A\ \sn\ B$, we read $A$ is \emph{strongly near} $B$ ($A$ \emph{strongly contacts} $B$).  The notation $A\ \notfar\ B$ reads $A$ is not strongly near $B$ ($A$ does not \emph{strongly contact} $B$).  A point $p\in A\subset X$ is a boundary point of $A$ (denoted by $\bdy A$), provided any closed ball $B_r(p)$ intersects both $A$ and its complement. For each \emph{strong proximity} (\emph{strong contact}), we assume the following relations:
\begin{description}
\item[{\rm\bf (snN5)}] $x \in \Int A \Rightarrow x\ \sn\ A$.
\item[{\rm\bf (snN6)}] $x \in \bdy A\ \mbox{and}\ A\cap B\neq\emptyset \Rightarrow x\ \sn\ A\ \mbox{and}\ A\ \sn\ B$.  
\item[{\rm\bf (snN7)}] $\{x\}\ \sn \{y\}\ \Leftrightarrow x=y$.  \qquad \textcolor{blue}{$\blacksquare$} 
\end{description}

The pair $(X,\sn)$ is called a \emph{strong proximity space}.  For strong proximity of the nonempty intersection of interiors, we have that $A \sn B \Leftrightarrow \Int A \cap \Int B \neq \emptyset$ or either $A$ or $B$ is equal to $X$, provided $A$ and $B$ are not singletons; if $A = \{x\}$, then $x \in \Int(B)$, and if $B$ too is a singleton, then $x=y$. It turns out that if $A \subset X$ is an open set, then each point that belongs to $A$ is strongly near $A$.  The bottom line is that strongly near sets always share points, which is another way of saying that sets with strong contact have nonempty intersection.   Let $\near$ denote a traditional proximity relation~\cite{Naimpally1970}.   By definition, a \Cech\ nerve \Cech$_r(A)$ is strongly near another \Cech\ nerve \Cech$_r(B)$, provided $B_r(p)\ \sn\ B_r(q)$ for some closed ball $B_r(p)\in$ \Cech$_r(A)$ and some closed ball $B_r(q)\in$ \Cech$_r(B)$.

\begin{proposition}\label{prop:2spoke}
Let \Cech$_r(A)$, \Cech$_r(B)$ be \Cech\ nerves in a strong proximity space $(X,\sn)$.  \Cech$_r(A)\ \sn\ $\Cech$_r(B)$, if and only if $B_r(p)\cap B_r(q) \neq \emptyset$ for some $B_r(p)\in \mbox{\Cech}_r(A), B_r(q)\in \mbox{\Cech}_r(B)$.
\end{proposition}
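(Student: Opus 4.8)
The plan is to reduce the statement about nerves to a statement about a single pair of closed balls, and then to read off both implications directly from the strong proximity axioms (snN2) and (snN6). By the definition of strong nearness of nerves given immediately before the proposition, $\mbox{\Cech}_r(A)\ \sn\ \mbox{\Cech}_r(B)$ holds precisely when $B_r(p)\ \sn\ B_r(q)$ for some $B_r(p)\in\mbox{\Cech}_r(A)$ and some $B_r(q)\in\mbox{\Cech}_r(B)$. Hence it suffices to prove, for a single pair of closed balls, that $B_r(p)\ \sn\ B_r(q)$ if and only if $B_r(p)\cap B_r(q)\neq\emptyset$; the quantifier ``for some pair'' then transfers the equivalence to the nerves with no extra work.

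For the forward direction I would simply invoke axiom (snN2). If $\mbox{\Cech}_r(A)\ \sn\ \mbox{\Cech}_r(B)$, the definition supplies balls $B_r(p),B_r(q)$ with $B_r(p)\ \sn\ B_r(q)$, and (snN2) immediately gives $B_r(p)\cap B_r(q)\neq\emptyset$, which is exactly the right-hand condition.

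The reverse direction is the one requiring care, and it is where I expect the main obstacle to lie. Because the balls are \emph{closed}, the natural first attempt -- axiom (snN4) -- is not adequate: (snN4) fires only when the \emph{interiors} meet, yet two closed balls can intersect in a single boundary point (external tangency, center distance equal to $2r$) while their interiors remain disjoint. The correct tool is instead the boundary axiom (snN6). Since $r>0$, the closed ball $B_r(p)$ has a nonempty bounding sphere $\bdy B_r(p)$, so I may choose any point $x\in\bdy B_r(p)$. Then the two hypotheses of (snN6), namely $x\in\bdy B_r(p)$ and $B_r(p)\cap B_r(q)\neq\emptyset$, are both in hand, and (snN6) delivers $B_r(p)\ \sn\ B_r(q)$ regardless of where in $B_r(p)$ the intersection actually occurs. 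Applying the definition of strong nearness of nerves once more yields $\mbox{\Cech}_r(A)\ \sn\ \mbox{\Cech}_r(B)$.

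The only point left to check is that the balls are genuine, non-degenerate sets so that the singleton caveats attached to the interior-based characterization do not intervene; this is guaranteed by $r>0$, which forces each $B_r(\cdot)$ to contain an open ball and a full sphere. Thus, once the closed-versus-open subtlety is recognized and (snN6) is used in place of (snN4), each implication collapses to a single application of an axiom.
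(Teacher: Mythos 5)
Your proof is correct and follows the same essential path as the paper's: the definitional reduction of nerve strong nearness to strong nearness of a single pair of balls, Axiom (snN2) for the forward implication, and Axiom (snN6) for the reverse. The one genuine difference is in how the reverse direction is organized. The paper splits into two cases: if an intersection point lies on $\bdy B_r(p)$ it applies (snN6), and otherwise it notes $\Int B_r(p)\cap \Int B_r(q)\neq\emptyset$ and applies (snN4). You instead observe that (snN6), as literally stated, requires only the existence of \emph{some} boundary point $x\in\bdy B_r(p)$ together with $B_r(p)\cap B_r(q)\neq\emptyset$; since $r>0$ guarantees $\bdy B_r(p)\neq\emptyset$, a single application of (snN6) covers both of the paper's cases and the appeal to (snN4) becomes unnecessary. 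Your version also makes explicit a point the paper leaves implicit, namely why (snN4) alone cannot close the argument: externally tangent closed balls intersect while their interiors do not. The net effect is a slightly cleaner, case-free rendering of the same argument.
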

\begin{proof}$\mbox{}\\
B_r(p)\cap B_r(q) \neq \emptyset$ for some $B_r(p)\in \mbox{\Cech}_r(A), B_r(q)\in \mbox{\Cech}_r(B)$.  If $x\in \bdy B_r(p)$, then, from Axiom (snN6), $B_r(p)\ \sn\ B_r(q)\Rightarrow$(from Axiom (snN2))$\ B_r(p)\cap B_r(q) \neq \emptyset\Leftrightarrow$\ \Cech$_r(A)\ \sn\ $\Cech$_r(B)$.  

Otherwise, $\Int B_r(p)\ \cap\ \Int B_r(q) \neq \emptyset$ implies $B_r(p)\ \sn\ B_r(q)\Rightarrow$(from Axiom (snN2))$\ B_r(p)\cap B_r(q) \neq \emptyset\Leftrightarrow$(from Axiom (snN4))\ \Cech$_r(A)\ \sn\ $\Cech$_r(B)$.
\end{proof}

\begin{corollary}\label{cors:overlappingCechComplexes}
Let \Cech$_r(A)$, \Cech$_r(B)$ be \Cech\ nerves in a strong proximity space $(X,\sn)$.  If a closed ball $B_r(p)\in  
\mbox{\Cech}_r(A)\ \cap\ \mbox{\Cech}_r(A)\neq \emptyset$, then $\mbox{\Cech}_r(A)\ \sn\ \mbox{\Cech}_r(B)$.
\end{corollary}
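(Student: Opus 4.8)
The plan is to read this corollary as an immediate specialization of Proposition~\ref{prop:2spoke}, in which the two witnessing balls are taken to coincide. The hypothesis should be understood as saying that a single closed ball $B_r(p)$ lies in the intersection $\mbox{\Cech}_r(A)\cap \mbox{\Cech}_r(B)$; that is, one and the same ball $B_r(p)$ belongs simultaneously to the nerve $\mbox{\Cech}_r(A)$ and to the nerve $\mbox{\Cech}_r(B)$. (The displayed hypothesis appears to carry a typographical slip, repeating $\mbox{\Cech}_r(A)$ where $\mbox{\Cech}_r(B)$ is intended.)

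First I would record the only geometric fact needed: a closed ball $B_r(p)$ is nonempty, since by definition its center satisfies $\norm{p-p}=0\leq r$, so $p\in B_r(p)$. Consequently $B_r(p)\cap B_r(p)=B_r(p)\neq\emptyset$, i.e., a ball always overlaps itself nontrivially.

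Next I would invoke Proposition~\ref{prop:2spoke}, which asserts that $\mbox{\Cech}_r(A)\ \sn\ \mbox{\Cech}_r(B)$ precisely when there exist $B_r(p)\in\mbox{\Cech}_r(A)$ and $B_r(q)\in\mbox{\Cech}_r(B)$ with $B_r(p)\cap B_r(q)\neq\emptyset$. Taking $B_r(q)$ to be the very same ball $B_r(p)$ supplied by the hypothesis furnishes such a pair: $B_r(p)\in\mbox{\Cech}_r(A)$, this same ball lies in $\mbox{\Cech}_r(B)$, and the preceding step gives the required nonempty overlap. The proposition then yields $\mbox{\Cech}_r(A)\ \sn\ \mbox{\Cech}_r(B)$, which completes the argument.

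There is no substantive obstacle here, as the content is entirely contained in Proposition~\ref{prop:2spoke}. The only points that warrant care are the correct reading of the (mistyped) hypothesis and the observation that the single shared ball serves simultaneously as both witnessing balls required by the proposition, which is legitimate precisely because a closed ball is never empty.
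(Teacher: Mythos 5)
Your proof is correct and follows the paper's own route: the paper simply states that the corollary is ``Immediate from Prop.~\ref{prop:2spoke},'' and your argument spells out exactly why---the shared ball (read through the obvious typo) serves as both witnesses in Proposition~\ref{prop:2spoke}, with nonemptiness of a closed ball supplying the required overlap. Nothing further is needed.
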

\begin{proof}
Immediate from Prop.~\ref{prop:2spoke}.
\end{proof}

\begin{example}
Several \Cech\ nerves in a strong proximity space $(X,\sn)$ are represented in Fig.~\ref{fig:CechComplexes}.
Observe that the interior of closed ball $B_r(p_A)\in \mbox{\Cech}_r(A)$ overlaps the interior of the closed ball $B_r(p_B)\in \mbox{\Cech}_r(B)$.  In other words, \Cech$_r(A)$, \Cech$_r(B)$ overlap.  Hence, from Cor.~\ref{cors:overlappingCechComplexes}, we have \Cech$_r(A)\ \sn\ $\Cech$_r(B)$. 
\qquad \textcolor{blue}{$\blacksquare$}  
\end{example}

\begin{figure}[!ht]
\centering
\begin{pspicture}
(-3.0,0.35)(7.0,7.2)
%
%
\pscircle[fillstyle=solid,fillcolor=green!20,opacity=0.5](1.15,4.85){1.2}
\pscircle[fillstyle=solid,fillcolor=gray!10,opacity=0.5](3.05,5.25){1.2}
\pscircle[fillstyle=solid,fillcolor=gray!20,opacity=0.5](2.55,5.85){1.2}
\pscircle[fillstyle=solid,fillcolor=gray!55,opacity=0.5](2.55,4.55){1.2}
\rput(2.2,3.1){$\boldsymbol{\mbox{\Cech}_r(B)}$}
\rput(0.7,5.25){$\boldsymbol{B_r(p_B)}$}
%
%
\pscircle[fillstyle=solid,fillcolor=green!20,opacity=0.5](-0.5,3.45){1.2}
\pscircle[fillstyle=solid,fillcolor=gray!10,opacity=0.5](-1.5,1.9){1.2}
\pscircle[fillstyle=solid,fillcolor=gray!20,opacity=0.5](-2.25,2.9){1.2}
\pscircle[fillstyle=solid,fillcolor=gray!55,opacity=0.5](-0.5,1.9){1.2}
\rput(-1.5,0.5){$\boldsymbol{\mbox{\Cech}_r(A)}$}
\rput(-0.5,3.55){$\boldsymbol{B_r(p_A)}$}
%
%
\pscircle[fillstyle=solid,fillcolor=green!20,opacity=0.5](5.8,2.0){1.2}
\pscircle[fillstyle=solid,fillcolor=gray!20,opacity=0.5](4.5,2.5){1.2}
\pscircle[fillstyle=solid,fillcolor=gray!55,opacity=0.5](5.5,3.5){1.2}
\rput(4.0,1.1){$\boldsymbol{\mbox{\Cech}_r(E)}$}
\rput(6.0,1.5){$\boldsymbol{B_r(p_E)}$}
\end{pspicture}
\caption[]{ Three \Cech\ complexes: \Cech$_r(A)$,\Cech$_r(B)$,\Cech$_r(E)$}
\label{fig:CechComplexes}
\end{figure}

\subsection{Descriptively Proximities on Collections of Cech Nerves}
Descriptive proximities result from the introduction of the descriptive intersection of pairs of nonempty sets. 

\begin{description}
\item[{\rm\bf ($\boldsymbol{\Phi}$)}] $\Phi(A) = \left\{\Phi(x)\in\mathbb{R}^n: x\in A\right\}$, set of feature vectors.
\item[{\rm\bf ($\boldsymbol{\dcap}$)}]  $A\ \dcap\ B = \left\{x\in A\cup B: \Phi(x)\in \Phi(A) \& \in \Phi(x)\in \Phi(B)\right\}$.
\qquad \textcolor{blue}{$\blacksquare$}
\end{description}

\begin{remark}
Depending on the context, the real-valued feature vector restriction $\Phi(A) = \left\{\Phi(x)\in\mathbb{R}^n: x\in A\right\}$ can be relaxed, allowing for other forms of feature vectors (see, for example,~\cite[\S 5]{DiConcilioEtAl2017MCSdescriptiveProximities}). \qquad \textcolor{blue}{$\blacksquare$}
\end{remark}

The descriptive Lodato proximity~\cite[\S 4.15.2]{Peters2013springer} that satisfies the following axioms.

\begin{description}
\item[{\rm\bf (dP0)}] $\emptyset\ \dfar\ A, \forall A \subset X $.
\item[{\rm\bf (dP1)}] $A\ \dnear\ B \Leftrightarrow B\ \dnear\ A$.
\item[{\rm\bf (dP2)}] $A\ \dcap\ B \neq \emptyset \Rightarrow\ A\ \dnear\ B$.
\item[{\rm\bf (dP3)}] $A\ \dnear\ (B \cup C) \Leftrightarrow A\ \dnear\ B $ or $A\ \dnear\ C$.
\item[{\rm\bf (dP4)}] $A\ \dnear\ B$ and $\{b\}\ \dnear\ C$ for each $b \in B \ \Rightarrow A\ \dnear\ C$. \qquad \textcolor{blue}{$\blacksquare$}
\end{description}

\begin{lemma}\label{prop:dnear}
Let $\left(X,\dnear\right)$ be a descriptive proximity space, $A,B\subset X$.  Then $A\ \dnear\ B \Rightarrow A\ \dcap\ B\neq \emptyset$.
\end{lemma}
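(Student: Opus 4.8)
The plan is to prove the lemma by unpacking what descriptive nearness means in terms of shared feature vectors and then exhibiting an explicit point that witnesses the descriptive intersection. First I would record the characterization underlying the relation $\dnear$: two sets are descriptively near exactly when their feature sets overlap, that is, $A\ \dnear\ B$ holds if and only if $\Phi(A)\cap\Phi(B)\neq\emptyset$. One direction of this equivalence is already contained in axiom \textbf{(dP2)}, once one observes that $A\ \dcap\ B\neq\emptyset \Leftrightarrow \Phi(A)\cap\Phi(B)\neq\emptyset$: any point of $A\ \dcap\ B$ carries a feature vector lying in both $\Phi(A)$ and $\Phi(B)$, and conversely a shared feature vector is realized by some point of $A$. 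The content of the lemma is the reverse direction, which rests on the fact that $\dnear$ is precisely the feature-overlap relation and not merely on the Lodato axioms.

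Granting $A\ \dnear\ B$, the key step is to produce a common feature vector. By the characterization there is some $\phi\in\Phi(A)\cap\Phi(B)$. Since $\phi\in\Phi(A)$, I would pick a point $a\in A$ with $\Phi(a)=\phi$; since $\phi\in\Phi(B)$, there is likewise some $b\in B$ with $\Phi(b)=\phi$. It then remains only to check that $a$ meets the three defining conditions for membership in $A\ \dcap\ B$: indeed $a\in A\subseteq A\cup B$; we have $\Phi(a)=\phi\in\Phi(A)$ because $a\in A$; and $\Phi(a)=\phi=\Phi(b)\in\Phi(B)$ because $b\in B$. Hence $a\in A\ \dcap\ B$, so $A\ \dcap\ B\neq\emptyset$, as required.

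The main obstacle is conceptual rather than computational and lies in the first step: justifying that descriptive nearness forces an exact match of feature vectors, rather than merely a proximity of descriptions inside the feature space. This is exactly where descriptive proximity departs from spatial proximity, for the analogous implication $A\ \near\ B\Rightarrow A\cap B\neq\emptyset$ fails for ordinary Lodato proximities, where near sets need not meet (disjoint sets with touching closures). I would therefore emphasize that the lemma is a statement about the specific descriptive proximity generated by matching of feature vectors, for which the converse of \textbf{(dP2)} genuinely holds; the witness extraction in the second step is then entirely routine. An equivalent route is the contrapositive: from $A\ \dcap\ B=\emptyset$ one gets $\Phi(A)\cap\Phi(B)=\emptyset$, and under the feature-overlap characterization this forces $A\ \dfar\ B$.
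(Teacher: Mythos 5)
Your proposal is correct and takes essentially the same route as the paper: both arguments hinge on reading $A\ \dnear\ B$, by definition, as the existence of points $x\in A$, $y\in B$ with $\Phi(x)=\Phi(y)$, from which a witness point in $A\ \dcap\ B$ is immediate. Your write-up merely makes explicit the witness-membership check and the (correct) observation that the axioms (dP0)--(dP4) alone would not yield the implication, which the paper compresses into the phrase ``by definition of $A\ \dnear\ B$.''
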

\begin{proof}
$A\ \dnear\ B \Leftrightarrow$ there is at least one $x\in A, y\in B$ such that $\Phi(x)=\Phi(y)$ (by definition of $A\ \dnear\ B$).  Hence, $A\ \dcap\ B\neq \emptyset$.
\end{proof}

Let $2^X$ denote a collection of subsets of $X$, $2^{2^X}$, a collection of subcollections of $2^X$.

\begin{proposition}
Let $\left(X,\dnear\right)$ be a descriptive proximity space, $\mbox{\Cech}_r(A),\mbox{\Cech}_r(B)\in 2^{2^X}$.  Then $\mbox{\Cech}_r(A)\ \dnear\ \mbox{\Cech}_r(B) \Rightarrow \mbox{\Cech}_r(A)\ \dcap\ \mbox{\Cech}_r(B)\neq \emptyset$.
\end{proposition}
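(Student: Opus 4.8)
The plan is to treat this Proposition as the collection-level counterpart of Lemma~\ref{prop:dnear} and to reduce it to a direct application of that lemma. The crucial observation is that each \Cech\ nerve is itself a set: $\mbox{\Cech}_r(A)$ is a subset of $2^X$ whose elements are the closed balls $B_r(x)$, $x\in A$. Hence the pair $\mbox{\Cech}_r(A),\mbox{\Cech}_r(B)$ belongs to $2^{2^X}$, and the descriptive proximity apparatus may be applied with the balls serving as the underlying points and the feature map $\Phi$ extended so that it assigns a feature vector to each ball.

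First I would unpack the hypothesis $\mbox{\Cech}_r(A)\ \dnear\ \mbox{\Cech}_r(B)$ by appealing to the same reading of descriptive nearness used in the proof of Lemma~\ref{prop:dnear}: two descriptively near sets must contain elements sharing a common feature vector. Thus there is at least one ball $B_r(p)\in \mbox{\Cech}_r(A)$ and one ball $B_r(q)\in \mbox{\Cech}_r(B)$ with $\Phi(B_r(p))=\Phi(B_r(q))$.

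Next I would feed this matched pair into the definition of the descriptive intersection ($\dcap$). Since the common value $\Phi(B_r(p))=\Phi(B_r(q))$ lies simultaneously in $\Phi(\mbox{\Cech}_r(A))$ and in $\Phi(\mbox{\Cech}_r(B))$, the ball $B_r(p)$ satisfies the membership condition defining $\mbox{\Cech}_r(A)\ \dcap\ \mbox{\Cech}_r(B)$. Consequently this descriptive intersection contains at least $B_r(p)$ and is therefore nonempty, which is exactly the desired conclusion.

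The only delicate point, and thus the main obstacle, is justifying that Lemma~\ref{prop:dnear}, stated for subsets $A,B\subset X$ of a descriptive proximity space $(X,\dnear)$, transfers verbatim to the higher level in which the ambient space is $2^X$ and the relevant sets are the nerves $\mbox{\Cech}_r(A),\mbox{\Cech}_r(B)\subset 2^X$. This transfer is legitimate once $\Phi$ is extended to balls, for then axioms (dP0)--(dP4) hold at the level of $2^{2^X}$ exactly as they do on $X$, and the argument becomes word-for-word that of Lemma~\ref{prop:dnear}.
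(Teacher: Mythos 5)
Your proposal is correct in substance and rests on the same key reduction as the paper --- both turn the Proposition into an application of Lemma~\ref{prop:dnear} --- but you apply that lemma at a genuinely different level. The paper's proof descends to the ground space $X$: it reads the hypothesis $\mbox{\Cech}_r(A)\ \dnear\ \mbox{\Cech}_r(B)$ as asserting $B_r(p_A)\ \dnear\ B_r(p_B)$ for some pair of closed balls, one from each nerve (this matches the definition the paper states just afterwards: a collection $\mathscr{A}$ is descriptively near a collection $\mathscr{B}$ provided $A\ \dnear\ B$ for some $A\in\mathscr{A}$, $B\in\mathscr{B}$), and then invokes Lemma~\ref{prop:dnear} with those two balls as the subsets of $X$; the witness of nonemptiness is therefore a \emph{point} of $X$ whose description occurs in both balls. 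You instead lift the feature map so that $\Phi$ assigns a vector to each ball, regard the nerves as subsets of the ground set $2^X$, and re-run the lemma's argument one level up, so your witness is a \emph{ball} with $\Phi(B_r(p))=\Phi(B_r(q))$. Your route is more systematic --- the Proposition becomes a literal instance of Lemma~\ref{prop:dnear} under the substitution of $2^X$ for $X$ --- and the paper does implicitly sanction $\Phi$ on balls (e.g., $\Phi(B_r(x))$ as surface colour in the example following this Proposition, and in Theorem~\ref{thm:spoke}). What it costs is exactly the point you flagged: you must postulate that the descriptive proximity axioms hold for the extended $\Phi$ on $2^X$, and your unpacking of the hypothesis (``some balls have equal feature vectors'') is not the same condition as the paper's member-nearness reading (``some balls contain points with equal feature vectors''); the two coincide only under extra assumptions tying $\Phi$ on a ball to $\Phi$ on its points. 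The paper's route avoids that extension by staying with the proximity already axiomatized on $X$, at the price of leaving the collection-level meaning of $\dcap$ informal; neither proof is fully rigorous on this point, but they resolve the ambiguity in different directions.
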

\begin{proof}
$\mbox{\Cech}_r(A)\ \dnear\ \mbox{\Cech}_r(B) \Rightarrow B_r(p_A)\ \dnear\ B_r(p_B)$ for some closed ball $B_r(p_A)\in \mbox{\Cech}_r(A)$ and some closed ball $B_r(p_A)\in \mbox{\Cech}_r(A)$.  Hence, from Lemma~\ref{prop:dnear}, the result follows.
\end{proof}

\noindent Also, from Lemma~\ref{prop:dnear}, we have the following result.


\begin{proposition}
Let $\left(X,\dnear\right)$ be a descriptive proximity space, $\mbox{\Cech}_r(A),\mbox{\Cech}_r(B)\in 2^{2^X}$.  Then 
\begin{compactenum}[1$^o$]
\item $\mbox{\Cech}_r(A)\ \dnear\ \mbox{\Cech}_r(B) \Leftrightarrow \Nrv\left\{B_r(p): p\in A\right\}\ \dnear\ \Nrv\left\{B_r(q): q\in B\right\}$.
\item $\mathop{\bigcap}\limits_{x\in A\cup B} B_r(x)\neq \emptyset$.
\end{compactenum}
\end{proposition}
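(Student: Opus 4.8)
The plan is to treat the two assertions separately: $1^o$ is a direct unfolding of the defining equation for a \Cech\ nerve, while $2^o$ is obtained by feeding the nearness relation into Lemma~\ref{prop:dnear}.

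For $1^o$, I would start from the definition recorded in the Notation, namely $\mbox{\Cech}_r(A) = \Nrv\{B_r(x): x\in A\}$ and, after the harmless renaming of the bound variable $x$ to $p$, $\mbox{\Cech}_r(A) = \Nrv\{B_r(p): p\in A\}$; likewise $\mbox{\Cech}_r(B) = \Nrv\{B_r(q): q\in B\}$. Substituting these identities into the relation $\mbox{\Cech}_r(A)\ \dnear\ \mbox{\Cech}_r(B)$ shows that the left- and right-hand sides of the stated biconditional denote one and the same relation between the same two sets. The equivalence therefore holds in both directions with nothing further to verify; it is a bookkeeping step that makes the nerve structure on each side explicit.

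For $2^o$, I would first invoke $1^o$ to express the hypothesis through the explicit nerves, and then apply Lemma~\ref{prop:dnear} with its two sets taken to be $\mbox{\Cech}_r(A)$ and $\mbox{\Cech}_r(B)$, yielding $\mbox{\Cech}_r(A)\ \dcap\ \mbox{\Cech}_r(B)\neq\emptyset$. This produces a pair of balls $B_r(p)\in\mbox{\Cech}_r(A)$ and $B_r(q)\in\mbox{\Cech}_r(B)$ with matching feature values. Recalling that, by the definition $\Nrv A=\{E:\bigcap E\neq\emptyset\}$, every simplex collected by a nerve is a family of balls with nonempty common part, I would then track this common element through the overlap supplied by $\dcap$---exactly the mechanism behind Corollary~\ref{cors:overlappingCechComplexes}, where a shared ball forces contact---and argue that it persists across the enlarged index set, so that $\bigcap_{x\in A\cup B}B_r(x)\neq\emptyset$.

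The step I expect to be the genuine obstacle is this last propagation. Lemma~\ref{prop:dnear} yields only a \emph{descriptive} intersection---points whose feature values coincide---whereas $2^o$ is written with the ordinary set intersection $\bigcap$. Matching descriptions of two balls need not produce a single spatial point common to every ball indexed by $A\cup B$, so some reconciliation is required. To make the argument airtight I would either read the intersection in $2^o$ as the descriptive intersection $\dcap$ taken over the combined family, or add the hypothesis that the two nerves actually share a ball (the situation of Corollary~\ref{cors:overlappingCechComplexes}); in the latter case the shared ball's common point immediately witnesses the intersection for all of $A\cup B$. Settling which reading is intended---spatial versus descriptive---and checking that the chosen witness meets every index in $A\cup B$ is the heart of the matter.
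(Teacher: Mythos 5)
Your treatment of 1$^o$ is exactly the paper's: its proof simply unfolds the definitions $\mbox{\Cech}_r(A)= \Nrv\left\{B_r(x): x\in A\right\}$ and $\mbox{\Cech}_r(B)= \Nrv\left\{B_r(x): x\in B\right\}$, so the stated biconditional relates one and the same pair of objects and there is nothing further to verify.

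For 2$^o$, the paper does precisely what you anticipated and nothing more: its entire proof reads ``From 1$^o$ and Lemma~\ref{prop:dnear}, the result follows.'' The obstacle you flagged is therefore a gap in the paper's own argument, not a defect peculiar to your reconstruction. Lemma~\ref{prop:dnear} applied to the two nerves yields only $\mbox{\Cech}_r(A)\ \dcap\ \mbox{\Cech}_r(B)\neq\emptyset$, i.e., an element of one collection whose description matches an element of the other; it produces no common spatial point, let alone a single point lying in every ball $B_r(x)$ with $x\in A\cup B$. Descriptively near nerves can be spatially disjoint: in the paper's own Fig.~\ref{fig:CechComplexes}, $\mbox{\Cech}_r(A)\ \dnear\ \mbox{\Cech}_r(E)$ because $\Phi(B_r(p_A))=\Phi(B_r(p_E))$, yet the two collections do not touch, so $\mathop{\bigcap}\limits_{x\in A\cup E} B_r(x)=\emptyset$ there. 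Hence 2$^o$ as written (with ordinary set intersection) does not follow from 1$^o$ and Lemma~\ref{prop:dnear}. Your two proposed repairs --- reading the intersection in 2$^o$ descriptively as $\dcap$ over the combined family, or strengthening the hypothesis to an actual shared ball as in Corollary~\ref{cors:overlappingCechComplexes} --- are the natural ways to obtain a true statement; the paper offers no such reconciliation. In short, your proposal reconstructs the paper's route faithfully and correctly diagnoses the precise step at which that route fails.
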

\begin{proof}$\mbox{}$\\
1$^o$: By definition, $\mbox{\Cech}_r(A)= \Nrv\left\{B_r(x): x\in A\right\}$.  
Similarly, $\mbox{\Cech}_r(B)= \Nrv\left\{B_r(x): x\in B\right\}$. 
Hence, $\Nrv\left\{B_r(p): p\in A\right\}\ \dnear\ \Nrv\left\{B_r(q): q\in B\right\}$.\\
2$^o$: From 1$^o$ and Lemma~\ref{prop:dnear}, the result follows.
\end{proof}

\begin{example}
Let $\mbox{\Cech}_r(A),\mbox{\Cech}_r(B),\mbox{\Cech}_r(E)$ be members of the descriptive Lodato proximity space $\left(2^{2^X},\dnear\right)$.  Let $\Phi(B_r(x))$ equal the surface colour of a closed ball $B_r(x)$.  By definition, a collection of subsets $\mathscr{A}$ is descriptively near another collection of subsets $\mathscr{B}$, provided $A\ \dnear\ B$ for some $A\in \mathscr{A}$ and some $B\in \mathscr{B}$. 
As a result, $\mbox{\Cech}_r(A)\ \dnear\ \mbox{\Cech}_r(E)$, since $\Phi(B_r(p_A)) = \Phi(B_r(p_E))$ in Fig.~\ref{fig:CechComplexes}.

Also observe that $\mbox{\Cech}_r(A)\ \dnear\ \mbox{\Cech}_r(B)$, since $\Phi(B_r(p_A)) = \Phi(B_r(p_B))$ in Fig.~\ref{fig:CechComplexes}.  Hence, from Lemma~\ref{prop:dnear}, $\mbox{\Cech}_r(A)\ \dcap\ \mbox{\Cech}_r(B)$.
\qquad \textcolor{blue}{$\blacksquare$}
\end{example}

Next, consider a proximal form of a Sz\'{a}z relator~\cite{Szaz1987}.  A \emph{proximal relator} $\mathscr{R}$ is a set of proximity relations on a nonempty set $X$~\cite{Peters2016relator}.  The pair $\left(X,\mathscr{R}\right)$ is a proximal relator space.  The connection between $\sn$ and $\near$ is summarized in Prop.~\ref{thm:sn-implies-near}.

\begin{lemma}\label{thm:sn-implies-near}
Let $\left(X,\left\{\near,\dnear,\sn\right\}\right)$ be a proximal relator space, $A,B\subset X$.  Then 
\begin{compactenum}[1$^o$]
\item $A\ \sn\ B \Rightarrow A\ \near\ B$.
\item $A\ \sn\ B \Rightarrow A\ \dnear\ B$.
\end{compactenum}
\end{lemma}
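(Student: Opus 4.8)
The plan is to route both implications through the strong-proximity axiom \textbf{(snN2)}, which asserts that strong nearness forces a nonempty ordinary intersection, and then to feed that intersection into the appropriate nearness axiom for each target relation. Thus neither part requires the full Lodato-style axioms; the intersection-based axioms \textbf{(P3)} and \textbf{(dP2)} suffice, which is why the hypothesis is merely a proximal relator space carrying $\near$, $\dnear$ and $\sn$.

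For part $1^o$, I would first apply \textbf{(snN2)} to $A\ \sn\ B$ to obtain $A\cap B\neq \emptyset$. The \Cech\ proximity axiom \textbf{(P3)}, namely $A\cap B\neq \emptyset \Rightarrow A\ \near\ B$, then delivers $A\ \near\ B$ at once. This part is essentially a one-line chaining of two axioms and presents no difficulty.

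For part $2^o$, the opening move is identical: \textbf{(snN2)} applied to $A\ \sn\ B$ produces a point $x\in A\cap B$. The remaining work is to upgrade this ordinary intersection to a descriptive intersection. Since $x\in A$ we have $\Phi(x)\in \Phi(A)$, and since $x\in B$ we have $\Phi(x)\in \Phi(B)$; moreover $x\in A\cup B$. By the defining condition of $\dcap$, these facts place $x\in A\ \dcap\ B$, so that $A\ \dcap\ B\neq \emptyset$. Invoking the descriptive proximity axiom \textbf{(dP2)}, $A\ \dcap\ B\neq \emptyset \Rightarrow A\ \dnear\ B$, then closes the argument. (Equivalently, one may cite Lemma~\ref{prop:dnear} in the contrapositive direction, but the direct route through \textbf{(dP2)} is cleaner.)

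The only genuinely substantive step, and the place to be careful, is the passage from $A\cap B\neq \emptyset$ to $A\ \dcap\ B\neq \emptyset$ in $2^o$: one must observe that a point lying in both $A$ and $B$ automatically carries a feature vector belonging simultaneously to $\Phi(A)$ and $\Phi(B)$, so that spatial overlap is always inherited as descriptive overlap. I expect no real obstacle beyond stating this inheritance precisely, since the converse — descriptive nearness need not imply spatial nearness — is exactly where the two notions diverge, and we are fortunately arguing in the easy direction.
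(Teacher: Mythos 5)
Your proof is correct and follows essentially the same route as the paper's: apply (snN2) to get $A\cap B\neq\emptyset$, conclude $A\ \near\ B$ from the intersection axiom, and in part $2^o$ observe that a common point's feature vector lies in both $\Phi(A)$ and $\Phi(B)$, so $A\ \dcap\ B\neq\emptyset$ and (dP2) yields $A\ \dnear\ B$. Incidentally, your citation of (P3) in part $1^o$ is the right one --- the paper's own proof cites ``Lodato Axiom (P2)'' at that step, which is evidently a slip for (P3), since (P2) is symmetry.
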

\begin{proof}$\mbox{}$\\
1$^o$: From Axiom (snN2), $A\ \sn\ B$ implies $A\ \cap\ B\neq \emptyset$, which implies $A\ \near\ B$ (from Lodato Axiom (P2)).\\
2$^o$: From 1$^o$, there are $x\in A, y\in B$\ common to $A$ and $B$.  Hence, $\Phi(x) = \Phi(y)$, which implies $A\ \dcap\ B\neq \emptyset$.  Then, from the descriptive Lodato Axiom (dP2), $A\ \dcap\ B \neq \emptyset \Rightarrow\ A\ \dnear\ B$. This gives the desired result.
\end{proof}

\begin{theorem}\label{thm:spoke}
Let $\left(X,\left\{\near,\dnear,\sn\right\}\right)$ be a proximal relator space, $\mbox{\Cech}_r(A),\mbox{\Cech}_r(B)\in 2^{2^X}$.  Then
\begin{compactenum}[1$^o$]
\item $\mbox{\Cech}_r(A)\ \sn\ \mbox{\Cech}_r(B)$ implies $\mbox{\Cech}_r(A)\ \dnear\ \mbox{\Cech}_r(B)$.
\item A closed ball $B_r(x)\ \in\ \mbox{\Cech}_r(A)\cap \mbox{\Cech}_r(B)$ implies if $B_r(x) \in \mbox{\Cech}_r(A)\ \dcap\ \mbox{\Cech}_r(B)$.
\item A closed ball $B_r(x)\ \in\ \mbox{\Cech}_r(A)\cap \mbox{\Cech}_r(B)$ implies $\mbox{\Cech}_r(A)\ \dnear\ \mbox{\Cech}_r(B)$.
\end{compactenum}
\end{theorem}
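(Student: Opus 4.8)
The plan is to prove all three implications by descending from the level of collections (members of $2^{2^X}$) to the level of individual closed balls (members of $2^X$), invoking the ball-level results already in hand, and then re-ascending to the collection level. For $1^o$, I would start by unwinding the defining convention for strong nearness of nerves stated just before Prop.~\ref{prop:2spoke}: the hypothesis $\mbox{\Cech}_r(A)\ \sn\ \mbox{\Cech}_r(B)$ asserts that there are balls $B_r(p)\in \mbox{\Cech}_r(A)$ and $B_r(q)\in \mbox{\Cech}_r(B)$ with $B_r(p)\ \sn\ B_r(q)$. Applying Lemma~\ref{thm:sn-implies-near}.$2^o$ to this pair of balls yields $B_r(p)\ \dnear\ B_r(q)$. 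Since these two balls then witness descriptive nearness of the enclosing collections, the defining convention for descriptive nearness of a collection (a collection is descriptively near another provided one of its members is descriptively near a member of the other) delivers $\mbox{\Cech}_r(A)\ \dnear\ \mbox{\Cech}_r(B)$.

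For $2^o$ the key observation is that ordinary intersection is always contained in descriptive intersection. If $B_r(x)\in \mbox{\Cech}_r(A)\cap \mbox{\Cech}_r(B)$, then $B_r(x)$ belongs to each collection, so its feature vector $\Phi(B_r(x))$ lies both in $\Phi(\mbox{\Cech}_r(A))$ and in $\Phi(\mbox{\Cech}_r(B))$, and of course $B_r(x)\in \mbox{\Cech}_r(A)\cup \mbox{\Cech}_r(B)$. Reading off the definition of $\dcap$ at the collection level (with balls playing the role of points) then places $B_r(x)$ in $\mbox{\Cech}_r(A)\ \dcap\ \mbox{\Cech}_r(B)$, which is the claim.

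For $3^o$ I would simply chain $2^o$ with the descriptive Lodato axiom (dP2). By $2^o$ the descriptive intersection $\mbox{\Cech}_r(A)\ \dcap\ \mbox{\Cech}_r(B)$ is nonempty, since it contains $B_r(x)$; and (dP2), read in the descriptive proximity space $\left(2^{2^X},\dnear\right)$, converts a nonempty descriptive intersection into descriptive nearness, giving $\mbox{\Cech}_r(A)\ \dnear\ \mbox{\Cech}_r(B)$.

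The only delicate point, and the step I would treat most carefully, is the repeated shift between the two proximity levels: the lemmas and axioms being cited are phrased for subsets of $X$, whereas the conclusions here concern collections of balls in $2^{2^X}$. I would therefore make explicit that, by the conventions recorded above, both $\sn$ and $\dnear$ on nerves are \emph{existential} statements about member balls, so that each ball-level fact lifts verbatim to the collection level; no additional proximity content is required beyond this bookkeeping, which is why each part reduces to a single application of a prior result.
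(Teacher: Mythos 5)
Your proof is correct, and for parts 1$^o$ and 3$^o$ it is essentially the paper's own argument made careful: the paper disposes of 1$^o$ by citing Lemma~\ref{thm:sn-implies-near} (part 2$^o$) and of 3$^o$ by citing its part 2$^o$ together with Lemma~\ref{thm:sn-implies-near}, leaving implicit exactly the level-shifting bookkeeping you spell out (the lemma is stated for subsets of $X$, while the nerves live in $2^{2^X}$; the existential conventions for $\sn$ and $\dnear$ on collections are what justify the lift, and your appeal to Axiom (dP2) in 3$^o$ is the same axiom the paper's lemma uses internally). Where you genuinely depart from the paper is part 2$^o$. The paper proves it by invoking Prop.~\ref{prop:2spoke}, asserting that $B_r(x)\in \mbox{\Cech}_r(A)\ \dcap\ \mbox{\Cech}_r(B)$ holds if and only if $\mbox{\Cech}_r(A)\ \sn\ \mbox{\Cech}_r(B)$ --- which is not what Prop.~\ref{prop:2spoke} says (that proposition characterizes $\sn$ of nerves via spatial intersection of member balls and mentions $\dcap$ nowhere), and the following sentence conflates points of the ball $B_r(x)$ with the ball as a member of the two collections. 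Your argument replaces this detour through strong proximity by the elementary containment of ordinary intersection in descriptive intersection: membership of $B_r(x)$ in both collections puts $\Phi(B_r(x))$ in both $\Phi(\mbox{\Cech}_r(A))$ and $\Phi(\mbox{\Cech}_r(B))$, so $B_r(x)\in \mbox{\Cech}_r(A)\ \dcap\ \mbox{\Cech}_r(B)$ directly from the definition of $\dcap$, with no proximity axiom needed. This is cleaner, and it repairs the weakest step in the paper's proof, since the paper's route only goes through if one first establishes your containment fact anyway.
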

\begin{proof}$\mbox{}$\\
1$^o$: Immediate from part 2$^o$ Lemma~\ref{thm:sn-implies-near}.\\
2$^o$: From Prop.~\ref{prop:2spoke}, $B_r(x)\ \in\ \mbox{\Cech}_r(A)\ \dcap\ \mbox{\Cech}_r(B)$, if and only if $\mbox{\Cech}_r(A)\ \sn\ \mbox{\Cech}_r(B)$.  Consequently, there are members of $B_r(x)$ common to $\mbox{\Cech}_r(A),\mbox{\Cech}_r(B)$, which have the same description. Hence, $\Phi(B_r(x))\ \in\ \mbox{\Cech}_r(A)\ \dcap\ \mbox{\Cech}_r(B)$.\\
3$^o$: Immediate from 2$^o$ and Lemma~\ref{thm:sn-implies-near}.
\end{proof}


\subsection{Strong Descriptive Proximities on Collections of Cech Nerves}

The descriptive strong proximity $\snd$ is the descriptive counterpart of $\sn$. 

\begin{definition}\label{def:snd}
Let $X$ be a topological space, $A, B, C \subset X$ and $x \in X$.  The relation $\snd$ on the family of subsets $2^X$ is a \emph{descriptive strong Lodato proximity}, provided it satisfies the following axioms.
%
\begin{description}
\item[{\rm\bf (dsnN0)}] $\emptyset\ {\sdfar}\ A, \forall A \subset X $, and \ $X\ \snd\ A, \forall A \subset X$
\item[{\rm\bf (dsnN1)}] $A\ \snd\ B \Leftrightarrow B\ \snd\ A$
\item[{\rm\bf (dsnN2)}] $A\ \snd\ B \Rightarrow\ A\ \dcap\ B \neq \emptyset$
\item[{\rm\bf (dsnN3)}] If $\{B_i\}_{i \in I}$ is an arbitrary family of subsets of $X$ and  $A\ \snd\ B_{i^*}$ for some $i^* \in I \ $ such that $\Int(B_{i^*})\neq \emptyset$, then $A\ \snd\ (\bigcup_{i \in I} B_i)$
\item[{\rm\bf (dsnN4)}] $\Int A\ \dcap\ \Int B \neq \emptyset \Rightarrow A\ \snd\ B$
\qquad \textcolor{blue}{$\blacksquare$}
\end{description}
\end{definition}

\noindent When we write $A\ \snd\ B$, we read $A$ is \emph{descriptively strongly near} $B$.  The notation $A\ \sdfar\ B$ reads $A$ is not descriptively strongly near $B$.
For each \emph{descriptive strong proximity}, we assume the following relations:
\begin{description}
\item[(dsnN5)] $\Phi(x) \in \Phi(\Int (A)) \Rightarrow x\ \snd\ A$.
\item[(dsnN6)] If $\Phi(x) \in \Phi(\Int (A)\ \mbox{\&}\ \Phi(x) \in \Phi(\Int (B) \Rightarrow A\ \snd\ B$.
\item[(dsnN7)] $\{x\}\ \snd\ \{y\} \Leftrightarrow \Phi(x)=\Phi(y)$.  \qquad \textcolor{blue}{$\blacksquare$} 
\end{description}

So, for example, if we take the strong proximity related to non-empty intersection of interiors, we have that $A\ \snd\ B \Leftrightarrow \Int A\ \dcap\ \Int B \neq \emptyset$ or either $A$ or $B$ is equal to $X$, provided $A$ and $B$ are not singletons; if $A = \{x\}$, then $\Phi(x) \in \Phi(\Int(B))$, and if $B$ is also a singleton, then $\Phi(x)=\Phi(y)$.

\begin{lemma}\label{cor:sndCechComplexes}
Let $\Int B_r(p)\in \mbox{\Cech}_r(A),\Int B_r(q)\in \mbox{\Cech}_r(B)$ be closed balls in \Cech\ nerves in a relator space $\left(X,\left\{\dnear,\snd\right\}\right)$.  Then
\begin{compactenum}[1$^o$]
\item $\Int B_r(p)\ \dnear\ \Int B_r(q)$ implies $\mbox{\Cech}_r(A)\ \dnear\ \mbox{\Cech}_r(B)$.
\item $\mbox{\Cech}_r(A)\ \dnear\ \mbox{\Cech}_r(B)$ implies $\mbox{\Cech}_r(A)\ \snd\ \mbox{\Cech}_r(B)$.
\end{compactenum}
\end{lemma}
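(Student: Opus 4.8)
The plan is to establish each part by translating the proximity relations into statements about descriptive intersections and then invoking the descriptive Lodato axioms (dP2) and (dsnN4) together with Lemma~\ref{prop:dnear} and the defining convention that a \Cech\ nerve is descriptively (strongly) near another whenever one of its balls is descriptively (strongly) near one of the other's balls.

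For part 1$^o$, I would start from the hypothesis $\Int B_r(p)\ \dnear\ \Int B_r(q)$ and apply Lemma~\ref{prop:dnear} to the interiors, obtaining $\Int B_r(p)\ \dcap\ \Int B_r(q)\neq\emptyset$. Since $\Int B_r(p)\subseteq B_r(p)$ and $\Int B_r(q)\subseteq B_r(q)$, a short monotonicity check on the definition of $\dcap$ shows that any $x$ witnessing the nonempty descriptive intersection of the interiors also witnesses $B_r(p)\ \dcap\ B_r(q)\neq\emptyset$ (its feature vector already lies in $\Phi(\Int B_r(p))\subseteq\Phi(B_r(p))$ and in $\Phi(\Int B_r(q))\subseteq\Phi(B_r(q))$). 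Axiom (dP2) then yields $B_r(p)\ \dnear\ B_r(q)$, and because $B_r(p)\in\mbox{\Cech}_r(A)$ and $B_r(q)\in\mbox{\Cech}_r(B)$, the defining convention for descriptive nearness of nerves gives $\mbox{\Cech}_r(A)\ \dnear\ \mbox{\Cech}_r(B)$.

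For part 2$^o$, the goal is to upgrade descriptive nearness to descriptive strong nearness, and the natural lever is Axiom (dsnN4), $\Int A\ \dcap\ \Int B\neq\emptyset\Rightarrow A\ \snd\ B$. Using the standing hypothesis of the lemma that the balls $B_r(p),B_r(q)$ have descriptively near interiors, Lemma~\ref{prop:dnear} again supplies $\Int B_r(p)\ \dcap\ \Int B_r(q)\neq\emptyset$. Feeding this directly into (dsnN4) produces $B_r(p)\ \snd\ B_r(q)$, and lifting through the defining convention for $\snd$ on collections yields $\mbox{\Cech}_r(A)\ \snd\ \mbox{\Cech}_r(B)$.

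The main obstacle I anticipate is the passage between a ball and its interior in the descriptive setting. In part 1$^o$ monotonicity is harmless, but part 2$^o$ genuinely needs the descriptive intersection of the \emph{interiors}, not merely of the closed balls; the nonempty descriptive intersection of the full balls would not by itself license (dsnN4). The proof therefore must carry the interior hypothesis through rather than reconstruct it from $\mbox{\Cech}_r(A)\ \dnear\ \mbox{\Cech}_r(B)$ alone, and I would make explicit that the positive radius $r>0$ guarantees $\Int B_r(p),\Int B_r(q)\neq\emptyset$ so that (dsnN4) is non-vacuous. A secondary point worth stating cleanly is the collection-level reading of $\dnear$ and $\snd$ — namely that relatedness of the nerves is witnessed by a single pair of related balls — since both conclusions are lifted through it.
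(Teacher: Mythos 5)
Your proof is correct and takes essentially the same route as the paper's: part 1$^o$ rests on the interiors sharing a point with identical description, lifted to the nerves through the collection-level convention (you route this through Lemma~\ref{prop:dnear}, monotonicity of $\dcap$, and Axiom (dP2), where the paper cites Axiom (dsnN6)), and part 2$^o$ is the paper's appeal to Axiom (dsnN4). Your handling of part 2$^o$ is in fact more careful than the paper's one-line proof: you correctly observe that (dsnN4) needs $\Int B_r(p)\ \dcap\ \Int B_r(q)\neq\emptyset$, which cannot be reconstructed from the nerve-level hypothesis $\mbox{\Cech}_r(A)\ \dnear\ \mbox{\Cech}_r(B)$ alone and must be carried over from the hypothesis of part 1$^o$ --- a point the paper glosses over when it simply writes ``From Axiom (dsnN4).''
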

\begin{proof}$\mbox{}$\\
1$^o$: The result follows from Axiom (dsnN6), since $\Int B_r(p), \Int B_r(p)$ have at least one member in common with the same description.\\
2$^o$: From Axiom (dsnN4), $\mbox{\Cech}_r(A)\ \snd\ \mbox{\Cech}_r(B)$.
\end{proof}

\begin{theorem}\label{lem:descriptivelyNearNerves}
\Cech\ nerves containing strongly near closed ball interiors are strongly descriptively near in a relator space $\left(X,\left\{\sn,\dnear,\snd\right\}\right)$.
\end{theorem}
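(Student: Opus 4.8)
The plan is to reduce the statement about whole \Cech\ nerves to a statement about a single pair of closed balls, and then push that pair through the two bridging lemmas already in hand. By the convention fixed earlier (a \Cech\ nerve is strongly near another provided some ball of the first strongly contacts some ball of the second), together with parts $1^o$ and $2^o$ of Lemma~\ref{cor:sndCechComplexes}, it is enough to exhibit one ball from each nerve whose interiors are descriptively related; the lemma then transports this up to the nerve level and converts $\dnear$ into $\snd$.

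First I would unpack the hypothesis. That the two nerves $\mbox{\Cech}_r(A)$ and $\mbox{\Cech}_r(B)$ "contain strongly near closed ball interiors" means there exist $B_r(p)\in\mbox{\Cech}_r(A)$ and $B_r(q)\in\mbox{\Cech}_r(B)$ with $\Int B_r(p)\ \sn\ \Int B_r(q)$. Applying Axiom (snN2) to this strong contact yields $\Int B_r(p)\cap\Int B_r(q)\neq\emptyset$, so there is a point $x$ lying in both interiors; this shared point is the object I carry through the rest of the argument.

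Next I would pass from spatial to descriptive contact and then lift. Since $x$ belongs to both interiors, the trivial equality $\Phi(x)=\Phi(x)$ places $\Phi(x)$ in both $\Phi(\Int B_r(p))$ and $\Phi(\Int B_r(q))$. The cleanest route invokes part $2^o$ of Lemma~\ref{thm:sn-implies-near} to turn $\Int B_r(p)\ \sn\ \Int B_r(q)$ into $\Int B_r(p)\ \dnear\ \Int B_r(q)$, then part $1^o$ of Lemma~\ref{cor:sndCechComplexes} to obtain $\mbox{\Cech}_r(A)\ \dnear\ \mbox{\Cech}_r(B)$, and finally part $2^o$ of Lemma~\ref{cor:sndCechComplexes} to conclude $\mbox{\Cech}_r(A)\ \snd\ \mbox{\Cech}_r(B)$, which is the asserted strong descriptive nearness. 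A parallel direct route applies Axiom (dsnN6) to the shared interior point to get $B_r(p)\ \snd\ B_r(q)$ at the ball level before lifting; I would prefer the first route, since it reuses the cited machinery verbatim and keeps the chain short.

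The only genuinely delicate step is the transition between the spatial strong proximity $\sn$ on the ball interiors and the descriptive strong proximity $\snd$ on the nerves: one must verify that the common interior point is transported correctly into the descriptive intersection $\dcap$ (via $\Phi(x)\in\Phi(\Int B_r(p))\cap\Phi(\Int B_r(q))$) and that the lift from a single ball pair to the enclosing nerves genuinely respects the convention defining $\snd$ for collections. Once this is checked, everything else is routine bookkeeping with the axioms and the two lemmas.
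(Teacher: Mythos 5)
Your proposal is correct and takes essentially the same route as the paper: extract a shared interior point from the strong contact of the ball interiors, then chain Lemma~\ref{thm:sn-implies-near} and Lemma~\ref{cor:sndCechComplexes} to pass from $\sn$ to $\dnear$ to $\snd$ at the nerve level. The only difference is the order of conversion versus lifting --- you convert to $\dnear$ at the level of ball interiors and then lift via part $1^o$ of Lemma~\ref{cor:sndCechComplexes}, whereas the paper lifts the spatial strong contact to the nerve level first and then converts; your ordering is, if anything, slightly cleaner, since Lemma~\ref{thm:sn-implies-near} is stated for subsets of $X$ rather than for collections of subsets.
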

\begin{proof}
Let $\Int B_r(p)\ \snd\ \Int B_r(q)$ for $\Int B_r(p)\in \mbox{\Cech}_r(A), \Int B_r(q)\in \mbox{\Cech}_r(B)$.  From Axiom (dsnN2), $\Int B_r(p)\ \cap\ \Int B_r(q)$.  Consequently, $\mbox{\Cech}_r(A)\ \sn\ \mbox{\Cech}_r(B)$.  Hence, from Lemma~\ref{thm:sn-implies-near}, $\mbox{\Cech}_r(A)\ \dnear\ \mbox{\Cech}_r(B)$.  Hence, from Lemma~\ref{cor:sndCechComplexes}, the result follows.
\end{proof}

\noindent The interior of a \Cech\ nerve (denoted by $\Int \mbox{\Cech}_r(X)$) is defined by
\[
\Int \mbox{\Cech}_r(X) = \left\{x\in X: x\ \sn\ \mathop{\bigcup}\limits_{B_r(p)\in \mbox{\Cech}_r(X)}\Int B_r(p)\right\}.
\]

\begin{theorem}\label{lem:stronglyDescriptivelyNearNerves}
Let $\mbox{\Cech}_r(A),\mbox{\Cech}_r(B)$ be \Cech\ nerves in a proximal relator space $\left(X,\left\{\sn,\dnear,\snd\right\}\right)$.
If $\Int \mbox{\Cech}_r(A)\ \dcap\ \Int \mbox{\Cech}_r(B)$, then $\mbox{\Cech}_r(A)\ \snd\ \mbox{\Cech}_r(B)$.
\end{theorem}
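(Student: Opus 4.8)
The plan is to read the displayed hypothesis $\Int \mbox{\Cech}_r(A)\ \dcap\ \Int \mbox{\Cech}_r(B)$ as the assertion that the descriptive intersection of the two nerve interiors is nonempty, and then to recognize this as precisely the antecedent of the descriptive strong Lodato axiom (dsnN4) with the two nerves playing the role of the ambient sets. First I would unfold the definition of $\dcap$: a nonempty $\Int \mbox{\Cech}_r(A)\ \dcap\ \Int \mbox{\Cech}_r(B)$ supplies a point $x$ together with witnesses $y_A\in \Int \mbox{\Cech}_r(A)$ and $y_B\in \Int \mbox{\Cech}_r(B)$ for which $\Phi(y_A)=\Phi(x)=\Phi(y_B)$, so the two nerve interiors carry a common description.

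Next I would invoke the defining relation $\Int \mbox{\Cech}_r(X)=\left\{x\in X: x\ \sn\ \bigcup_{B_r(p)\in \mbox{\Cech}_r(X)}\Int B_r(p)\right\}$ to re-express membership of $y_A$ and $y_B$ as strong nearness to the unions of ball interiors inside $\mbox{\Cech}_r(A)$ and $\mbox{\Cech}_r(B)$ respectively. This confirms that the commonly-described witness genuinely lives in the interior portion of each nerve, so the descriptive overlap is an overlap of interiors rather than of mere boundary material, which is exactly what (dsnN4) requires.

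With this in hand, axiom (dsnN4), namely $\Int A\ \dcap\ \Int B\neq\emptyset \Rightarrow A\ \snd\ B$, applies verbatim with $A:=\mbox{\Cech}_r(A)$ and $B:=\mbox{\Cech}_r(B)$, yielding $\mbox{\Cech}_r(A)\ \snd\ \mbox{\Cech}_r(B)$. Alternatively, the common-description witness first gives $\mbox{\Cech}_r(A)\ \dnear\ \mbox{\Cech}_r(B)$, and then Lemma~\ref{cor:sndCechComplexes}(2$^o$) upgrades descriptive nearness to descriptive strong nearness; both routes terminate at (dsnN4).

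The main obstacle I anticipate is purely definitional: the symbol $\Int \mbox{\Cech}_r(X)$ is assigned its own special meaning (a set of points of $X$ strongly near the union of ball interiors), whereas (dsnN4) speaks of the interior $\Int$ of an arbitrary set. I would therefore need to verify that feeding $\Int \mbox{\Cech}_r(A)$ and $\Int \mbox{\Cech}_r(B)$ into the antecedent of (dsnN4) is legitimate, i.e. that the specially-defined nerve interior is the interior relevant to the descriptive strong proximity $\snd$ on $2^{2^X}$, so that the axiom fires as intended. Once that identification is granted, the remainder of the argument is immediate.
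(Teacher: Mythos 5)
Your proof is correct at the level of rigor this framework admits, but it takes a genuinely different route from the paper's. The paper disposes of the theorem in one line, citing Theorem~\ref{lem:descriptivelyNearNerves}, whose own proof runs a long chain: $\Int B_r(p)\ \snd\ \Int B_r(q)$ for some balls in the two nerves $\Rightarrow$ (dsnN2) overlapping ball interiors $\Rightarrow$ $\mbox{\Cech}_r(A)\ \sn\ \mbox{\Cech}_r(B)$ $\Rightarrow$ (Lemma~\ref{thm:sn-implies-near}) $\mbox{\Cech}_r(A)\ \dnear\ \mbox{\Cech}_r(B)$ $\Rightarrow$ (part 2$^o$ of Lemma~\ref{cor:sndCechComplexes}, itself an appeal to Axiom (dsnN4)) the conclusion. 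You instead read the hypothesis $\Int \mbox{\Cech}_r(A)\ \dcap\ \Int \mbox{\Cech}_r(B)\neq\emptyset$ as the literal antecedent of Axiom (dsnN4) for the pair of nerves and fire that axiom once; your fallback route (common-description witness $\Rightarrow$ $\dnear$ $\Rightarrow$ part 2$^o$ of Lemma~\ref{cor:sndCechComplexes}) recovers only the tail of the paper's chain. Your direct route is arguably the sounder of the two: to invoke Theorem~\ref{lem:descriptivelyNearNerves} the paper must first convert the descriptive overlap of nerve interiors into the existence of strongly near ball interiors (and, inside that theorem's proof, into spatial overlap), a conversion it never spells out and which does not follow in general, since points with equal descriptions need not be spatially close. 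Both routes share the definitional caveat you rightly flag: $\Int\mbox{\Cech}_r(\cdot)$ is a specially defined point set, while (dsnN4) is axiomatized for subsets of $X$ rather than for members of $2^{2^X}$; but the paper makes exactly the same identification in Lemma~\ref{cor:sndCechComplexes}, so your argument is no less rigorous than the paper's own, and it is shorter and avoids the unjustified descriptive-to-spatial step.
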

\begin{proof}
Immediate from Theorem~\ref{lem:descriptivelyNearNerves}.
\end{proof}

\section{Main Results}
Homotopy types~\cite[\S III.2]{Edelsbrunner1999},~\cite[\S 5.3]{Peters2016excursions} lead to significant results in the theory of shapes and nerve in complexes covering a shape.  The flexibility provided by descriptive \Cech\ nerves makes it possible to cover the interior as well as the contour of shapes, since the neighbourhoods of sets of simplexes of such nerve complexes can easily fill irregular shape interiors that change and deform over time.  The shapes of descriptive neighbourhoods in \Cech\ nerves tend to be non-uniform with features such as diameter, perimeter length, convexity and boundedness. Such shape-filling nerves have a persistence utility that stays in existence, provided the amount of contour overlap is below some threshold and the strong descriptive proximity of complexes covering a particular shape, is very high.  

This proposed approach to shape theory ushers in a very practical form of persistence homology~\cite[\S VII]{Edelsbrunner1999},~\cite{EdelsbrunnerMorozov2013} useful in applications such as the study of shape in proteins and protein complexes~\cite{Agarwal2006matchingShapesInProteinDocking,Ban2004interfaceSurfacesProteinProteinComplexes}, plant root structure~\cite{Galkovsky2012plantrootShape}, speech patterns~\cite{BrownKnudson2009humanSpeech}, and digital image compression and segmentation~\cite{Carlsson2008naturalImageSpaces,Edelsbrunner2003MorseSmaleComplexes}, neuroscience~\cite{Dabaghian2012hippocampalSpatialMapPersistence}, orthodontia~\cite{GambleHeo2010landmarkShapePersistence}, gene expression~\cite{Dequeant2008timeSeriesSegmentationClock} and especially in the study of time-varying shapes~\cite{Munch2013PhDpersistentHomology}.  In a persistence homology on time varying systems, the equivalence and types of the shapes of descriptive nerves is important, since we need to know when one shape-filling nerve complex has a persistence that is similar to that of another nerve complex.

Let $f,g:X\longrightarrow Y$ be two continuous maps.  A \emph{homotopy} between $f$ and $g$ is a continuous map $H:X\times[0,1]\longrightarrow Y$ so that $H(x,0) = f(x)$ and $H(x,1) = g(x)$.  The sets $X$ and $Y$ are \emph{homotopy equivalent}, provided there are continuous maps $f: X\longrightarrow Y$ and $g:Y\longrightarrow X$ such that $g\circ f \simeq \mbox{id}_X$ and $f\circ g \simeq \mbox{id}_Y$.  This yields an equivalence relation $X\simeq Y$.  In addition, $X$ and $Y$ have the same \emph{homotopy type}, provided $X$ and $Y$ are homotopy equivalent.  

\begin{lemma}\label{lem:MNCnerves}
Let $\mbox{\Cech}_r(K)$ be a \Cech\ nerve on $K$ endowed with the strong proximity $\sn$.  Then $\mathop{\bigcap}\limits_{p\in K} B_r(p)\neq \emptyset$.
\end{lemma}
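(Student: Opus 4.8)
The plan is to obtain the conclusion essentially by unwinding the definition of a \Cech\ nerve and then using the strong-proximity axiom (snN2) to convert the \emph{strongly near} relation into genuine set intersection. First I would recall that, by construction, $\mbox{\Cech}_r(K) = \Nrv\left\{B_r(p): p\in K\right\}$, and that the defining condition for this nerve is precisely that the participating balls have nonempty common intersection. Thus the hypothesis that $\mbox{\Cech}_r(K)$ \emph{is} a \Cech\ nerve already carries, encoded in the definition $\Nrv A = \left\{E\subseteq K: \bigcap E\neq \emptyset\right\}$, the assertion $\bigcap_{p\in K} B_r(p)\neq \emptyset$ that we must prove.

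To present this through the proximity structure rather than purely by definition, I would argue as follows. Since $K$ carries the strong proximity $\sn$ and the balls $B_r(p)$, $p\in K$, belong to a single \Cech\ nerve, each pair $B_r(p), B_r(q)$ is strongly near, $B_r(p)\ \sn\ B_r(q)$. By Axiom (snN2), $B_r(p)\ \sn\ B_r(q)$ implies $B_r(p)\ \cap\ B_r(q)\neq \emptyset$, so the balls intersect pairwise; and, being members of one nerve (one simplex), they share a common point. Proposition~\ref{prop:2spoke} would then be the cleanest lemma to invoke, since it records exactly this equivalence between strong contact and nonempty intersection of balls.

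The step I expect to be the main obstacle is the passage from pairwise contact to a \emph{common} intersection $\bigcap_{p\in K} B_r(p)$: in an arbitrary proximity space, pairwise nearness does not force a common point. Here the gap is closed in two independent ways, and I would record both. On the one hand, the definitional route is decisive: being a \Cech\ nerve means, by $\Nrv A = \left\{E\subseteq K: \bigcap E\neq \emptyset\right\}$, that $\bigcap_{p\in K} B_r(p)\neq \emptyset$ outright. On the other hand, the closed balls $B_r(p)$ are convex subsets of the Euclidean plane, so a Helly-type argument would also deliver a common point once every triple of balls meets. I would foreground the definitional argument as the proof and note the convexity of the balls only as a remark explaining why the conclusion is geometrically robust.
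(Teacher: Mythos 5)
Your proof is correct, and it is in fact more careful than the paper's own. The paper's proof consists of exactly the pairwise argument you describe as the ``presentation'' route: closed balls $B_r(p), B_r(q)\in \mbox{\Cech}(r)$ overlap, so by Axiom (snN2) $B_r(p)\cap B_r(q)\neq\emptyset$, and then the paper simply writes ``hence, the result follows.'' That last step is precisely the gap you name: pairwise intersection of convex sets in the plane does not by itself yield a common point (Helly in $\mathbb{R}^2$ needs every \emph{triple} to meet), and the paper supplies no bridge from pairwise overlap to $\bigcap_{p\in K} B_r(p)\neq \emptyset$. The bridge you foreground --- that $\mbox{\Cech}_r(K) = \Nrv\left\{B_r(p): p\in K\right\}$ together with the definition $\Nrv A = \left\{E\subseteq K: \bigcap E\neq \emptyset\right\}$ builds the common intersection into the hypothesis --- is the only complete argument available here; it makes the lemma essentially a restatement of the definition, and saying so openly is the right call. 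One caution on your side remark: the Helly alternative is not an independent second proof as stated, since (snN2) delivers only pairwise intersection, and the triple-wise hypothesis Helly needs would itself have to be extracted from the definition of the nerve, at which point the full intersection is already in hand. So your structure --- definitional argument as the proof, proximity axioms as a reformulation, Helly as geometric color --- is sound, whereas the paper inverts this and leaves the pairwise-to-common step unjustified.
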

\begin{proof}
Closed balls $B_r(p), B_r(q)\in \mbox{\Cech}(r)$ overlap, {\em i.e.}, $B_r(p)\ \sn\ B_r(q)\Rightarrow B_r(p)\cap B_r(q)\neq\emptyset$ (from Axiom (snN2)).  Hence, the result follows.
\end{proof}

\begin{theorem}\label{thm:sn-nerve}
Let $\left(\mbox{\Cech}(r),\left\{\near,\dnear,\sn\right\}\right)$ be a proximal relator space, closed balls $B_r(p), B_r(q)\in \mbox{\Cech}(r)$.  Then 
\begin{compactenum}[1$^o$]
\item $B_r(p)\ \sn\ B_r(q) \Rightarrow B_r(p)\ \near\ B_r(q)$.
\item $B_r(p)\ \sn\ B_r(q) \Rightarrow B_r(p)\ \dnear\ B_r(q)$.
\end{compactenum}
\end{theorem}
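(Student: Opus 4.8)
The plan is to observe that this theorem is simply the specialization of the general relator-space Lemma~\ref{thm:sn-implies-near} to the case in which the two nonempty sets under consideration are closed balls belonging to a \Cech\ nerve. Nothing new has to be built; the entire content is to check that $B_r(p)$ and $B_r(q)$ meet the hypotheses of that lemma and then read off the two conclusions.

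First I would note that each closed ball $B_r(p), B_r(q)\in \mbox{\Cech}(r)$ is a nonempty subset of the ambient Euclidean plane $X=\mathbb{R}^2$, hence a legitimate instance of the sets $A,B$ appearing in Lemma~\ref{thm:sn-implies-near}. Since the relator $\left\{\near,\dnear,\sn\right\}$ in the hypothesis is carried over $X$, every axiom used in that lemma is available for the balls. For part $1^o$ I would invoke Lemma~\ref{thm:sn-implies-near}(1) with $A=B_r(p)$ and $B=B_r(q)$: unwinding its proof, Axiom (snN2) turns $B_r(p)\ \sn\ B_r(q)$ into $B_r(p)\cap B_r(q)\neq\emptyset$, and the \Cech\ proximity axiom (P3) then yields $B_r(p)\ \near\ B_r(q)$.

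For part $2^o$ I would similarly appeal to Lemma~\ref{thm:sn-implies-near}(2). The spatial overlap obtained in part $1^o$ supplies a point $x\in B_r(p)\cap B_r(q)$; because $x$ trivially shares its own description, $\Phi(x)\in \Phi(B_r(p))$ and $\Phi(x)\in \Phi(B_r(q))$, so $B_r(p)\ \dcap\ B_r(q)\neq\emptyset$, whence the descriptive Lodato axiom (dP2) gives $B_r(p)\ \dnear\ B_r(q)$. The ``main obstacle'' here is essentially absent: the only substantive point to verify is that $\sn$, $\near$, and $\dnear$ all reside in a single proximal relator over $X$, so that the implication $\sn\Rightarrow\cap\neq\emptyset\Rightarrow\near$ and its descriptive analogue chain together cleanly, and this is exactly what the relator-space hypothesis guarantees.
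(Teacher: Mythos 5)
Your proposal is correct, and the underlying axiom chain is the same one the paper relies on: Axiom (snN2) turns $B_r(p)\ \sn\ B_r(q)$ into $B_r(p)\cap B_r(q)\neq\emptyset$, a Lodato axiom then yields $\near$, and a common point gives $B_r(p)\ \dcap\ B_r(q)\neq\emptyset$ so that (dP2) yields $\dnear$. The routing differs in part 1$^o$, however: you specialize Lemma~\ref{thm:sn-implies-near} directly to the two balls, using nothing about the nerve, whereas the paper detours through Lemma~\ref{lem:MNCnerves}, invoking the nerve property that the balls of $\mbox{\Cech}(r)$ share a common point, and then cites (snN2) as if it directly delivered $\near$. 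Your route is tighter and more explicit about the logic: the strong-nearness hypothesis alone suffices, the nerve structure is superfluous for the implication as stated, and you spell out the (P3) step that the paper's proof elides (indeed, the paper's proof of Lemma~\ref{thm:sn-implies-near} misattributes that step to Axiom (P2), which is symmetry; your citation of (P3) is the correct one). What the paper's detour buys is the side observation that in a \Cech\ nerve \emph{every} pair of balls overlaps, so the conclusions hold for all pairs rather than only those hypothesized to be strongly near --- but that is a strengthening of the statement, not a needed ingredient of its proof.
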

\begin{proof}$\mbox{}$\\
1$^o$: From Lemma~\ref{lem:MNCnerves}, $\mbox{\Cech}(r)$ is a \Cech\ nerve.  
$B_r(p)\ \sn\ B_r(q)$ for every pair of closed balls $B_r(p), B_r(q)$ in the nerve $\mbox{\Cech}(r)$.  From Lemma~\ref{lem:MNCnerves}, $\mathop{\bigcap}\limits_{B_r(p)\in \Nrv K} B_r(p)\neq \emptyset$.  Consequently, from Axiom (snN2), $B_r(p)\ \near\ B_r(q)$ for all closed balls $B_r(p), B_r(q)\in \mbox{\Cech}(r)$.\\
2$^o$: Closed balls $B_r(p), B_r(q)\in \mbox{\Cech}(r)$ overlap.  Hence, $B_r(p)\ \dcap\ B_r(q)\neq \emptyset$.  Then, from Lemma~\ref{thm:sn-implies-near}, $B_r(p)\ \dcap\ B_r(q) \neq \emptyset \Rightarrow\ B_r(p)\ \dnear\ B_r(q)$. This gives the desired result for each pair of closed balls in the \Cech\ nerve.
\end{proof}

\begin{lemma}\label{thm:1spokeHomotopy}
Let  $\cx K$ be a collection of closed balls, which are convex sets covering a finite bounded region $K$ in the Euclidean plane.  Then the nerve $\mbox{\Cech}_r(K)$ and the union of the closed balls in $\cx K$ have the same homotopy type.
\end{lemma}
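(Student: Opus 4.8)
The plan is to reduce the statement directly to the Edelsbrunner--Harer Nerve Theorem (Theorem~\ref{thm:nerveTheorem}), whose hypotheses are met once we check that the members of $\cx K$ form a finite family of closed, convex sets in the plane. First I would record that each member of $\cx K$ is a closed ball $B_r(p) = \left\{y\in X: \norm{p-y}\leq r\right\}$, and that every such ball is both closed (being a sublevel set of the continuous map $y\mapsto \norm{p-y}$) and convex: given $y_0,y_1\in B_r(p)$ and $t\in[0,1]$, the triangle inequality together with homogeneity of the norm gives $\norm{p-((1-t)y_0+ty_1)}\leq (1-t)\norm{p-y_0}+t\norm{p-y_1}\leq r$. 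Thus the family $F=\left\{B_r(p): p\in K\right\}$ consists of closed convex subsets of $X=\mathbb{R}^2$.

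Next I would verify finiteness. Since $K$ is a finite, bounded region, the set of ball centres occurring in $\cx K$ is finite, and with a single fixed radius $r$ the collection $F=\left\{B_r(p): p\in K\right\}$ is a finite collection of closed convex sets. This is exactly where the advantage of \Cech\ nerves noted in the introduction is used: a ball is determined by its centre and the common radius $r$, so finiteness of $F$ follows immediately from finiteness of $K$, with no appeal to any vertex data.

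The third step is to identify the nerve. By the definition of a \Cech\ nerve, $\mbox{\Cech}_r(K)=\Nrv\left\{B_r(p): p\in K\right\}=\Nrv F$, so $\mbox{\Cech}_r(K)$ is precisely the nerve of the finite family $F$ of closed convex sets, while the union of the members of $\cx K$ is $\bigcup F$. With these identifications the hypotheses of Theorem~\ref{thm:nerveTheorem} are exactly satisfied, and invoking it yields that $\Nrv F=\mbox{\Cech}_r(K)$ and $\bigcup F$ have the same homotopy type, which is the assertion of the lemma.

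The main obstacle is not the topology --- the Nerve Theorem does all of that work --- but rather the bookkeeping needed to match objects: one must make sure that the collection $\cx K$ of closed balls covering $K$ is literally the same family $F$ whose nerve is $\mbox{\Cech}_r(K)$, and that the covering condition invoked in the earlier Remark guarantees $K\subseteq \bigcup F$, so that the union under consideration genuinely models the region $K$. Once this matching is made precise, the claimed homotopy equivalence is an immediate corollary of Theorem~\ref{thm:nerveTheorem}.
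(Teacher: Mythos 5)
Your proposal is correct and takes essentially the same route as the paper, whose entire proof is a one-sentence appeal to the Edelsbrunner--Harer Nerve Theorem (Theorem~\ref{thm:nerveTheorem}). Your extra steps---verifying that the balls $B_r(p)$ are closed and convex, that finiteness of $K$ makes $F=\left\{B_r(p): p\in K\right\}$ a finite family, and that $\mbox{\Cech}_r(K)=\Nrv F$ while the union in question is $\bigcup F$---simply make explicit the hypothesis-checking that the paper leaves implicit.
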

\begin{proof}
From Theorem~\ref{thm:nerveTheorem}, the union of the closed balls $B_r(p\in K)\in \cx K$ and nerve $\mbox{\Cech}_r(K)$ have the same homotopy type.
\end{proof}

\begin{remark}
From Lemma~\ref{thm:1spokeHomotopy}, the nerve $\mbox{\Cech}_r(p), p\in K$ and $\mathop{\bigcup}\limits_{p\in K}\mbox{\Cech}_{r}(p) = \cx K$ are homotopy equivalent.  Consequently, the descriptive nerve $\mbox{\Cech}_{r,\Phi}(p\in K)\in \cx_{\Phi} K$ and $\mathop{\bigcup}\limits_{p\in K}\mbox{\Cech}_{r,\Phi}(p) = \cx_{\Phi} K$ are also homotopy equivalent.  This proves Theorem~\ref{thm:nerveSpokeTheorem}. \qquad \textcolor{blue}{$\blacksquare$}
\end{remark}

\bibliographystyle{amsplain}
\bibliography{NSrefs}

\end{document}